

\documentclass[a4paper,12pt]{article}

\usepackage{fancyhdr}

\usepackage[scale=0.7]{geometry}

\usepackage{amsmath}
\usepackage{amsthm}
\usepackage{amssymb}
\usepackage{amscd}          			

\usepackage{bbm}            			
\usepackage{mathrsfs}           		




\usepackage{graphicx}          		


\usepackage{verbatim}           		



\theoremstyle{plain}
\newtheorem{Theorem}{Theorem}[section]
\newtheorem{Lemma}[Theorem]{Lemma}

\newtheorem{Proposition}[Theorem]{Proposition}
\newtheorem{Conjecture}[Theorem]{Conjecture}

\theoremstyle{definition}
\newtheorem{Definition}[Theorem]{Definition}
\newtheorem{Example}[Theorem]{Example}

\theoremstyle{remark}
\newtheorem{Remark}[Theorem]{Remark}

\newcommand{\N}{\mathbb{N}}     
\newcommand{\R}{\mathbb{R}}     

\newcommand{\Z}{\mathbb{Z}}         




\newcommand{\calB}{\mathscr{B}}

\newcommand{\calF}{\mathscr{F}}
\newcommand{\calG}{\mathscr{G}}

\newcommand{\calL}{\mathscr{L}}

\newcommand{\calR}{\mathscr{R}}

\newcommand{\calX}{\mathscr{X}}


\newcommand{\bdelta}{\boldsymbol{\delta}}


\newcommand{\bR}{\boldsymbol{R}}

\newcommand{\bT}{\boldsymbol{T}}

\newcommand{\bb}{\boldsymbol{b}}




\DeclareMathOperator{\diam}{diam}			


\usepackage[mathcal]{eucal}					





\newcommand{\hel}
{
\hskip2.5pt{\vrule height8pt width.5pt depth0pt}
\hskip-.2pt\vbox{\hrule height.5pt width8pt depth0pt}
\,
}






\renewcommand{\geq}{\geqslant}
\renewcommand{\leq}{\leqslant}
\renewcommand{\epsilon}{\varepsilon}

\newcommand{\limsupe}{\mathop{\overline{\lim}}}

\linespread{1}




\usepackage{color}

\begin{document}

\title{Almost everywhere convergence for Lebesgue differentiation {processes} along rectangles}

\author{E.~D'Aniello, A.~Gauvan, L.~Moonens and J.~Rosenblatt}
\date{June, 2022}

\maketitle

\begin{abstract}
In this paper, we study Lebesgue differentiation processes along rectangles $R_k$ shrinking to the origin in the Euclidean plane, and the question of their almost everywhere convergence in $L^p$ spaces. In particular, classes of examples of such processes failing to converge a.e. in $L^\infty$ are provided, for which $R_k$ is known to be oriented along the slope $k^{-s}$ for $s>0$, yielding an interesting counterpart to the fact that the directional maximal operator associated to the set $\{k^{-s}:k\in\N^*\}$ fails to be bounded in $L^p$ for any $1\leq p<\infty$.
\end{abstract}

\footnotetext{\emph{2020 Mathematics Subject Classification}: Primary 42B25, 26B05, Secondary 42B35.\\\emph{Keywords}: Maximal Functions, Differentiation of Real Functions, Almost Everywhere Convergence of Differentiation Processes.}

\maketitle

\section{Introduction}

Given a measure space $(\Omega,\calF,\mu)$ and a sequence $\bT:=(T_k)_{k\in\N}$ of linear operators $T_k:L^p(\Omega,\calF,\mu)\to L^0(\Omega,\calF,\mu)$ sending a given $f\in L^p(\Omega,\calF,\mu)$ to a measurable function $T_kf$ for each $k\in\N$, the question of the \emph{almost everywhere convergence} of $T_kf$ to $f$ \emph{for all} $f\in L^p(\Omega,\calF,\mu)$ is an important problem in real analysis, especially in the case where $(T_k)$ approximates the identity map in $L^p(\Omega,\calF,\mu)$ when $k$ grows. Under mild hypotheses on the operators $T_k$, $k\in\N$ (like continuity in measure and, for some results, positivity and their commuting with translations), it follows from general principles, associated to the names of E.~Stein, D.~Burkholder and S.~Sawyer, that the holding of the above a.e. convergence property in $L^p(\Omega, \calF,\mu)$ is equivalent to the fact that the associated maximal operator $T^*$ defined for a given function $f\in L^p(\Omega,\calF,\mu)$ by:
$$
T^*f(x):=\sup_{k\in\N} |T_kf(x)|,
$$
either satisfies an inequality of weak type $(p,p)$ (see below for a precise definition) or satisfies the seemingly weaker property that one has $T^*f<+\infty$, $\mu$-a.e. in $\Omega$, for all $f\in L^p(\Omega,f,\mu)$ (see \emph{e.g.} Stein \cite{STEINART}, Burkholder \cite{BURKHOLDER} and Sawyer \cite{SAWYER} for original formulations of this principle, and Garsia \cite{GARSIA} for a beautiful exposition of this way of dealing with a.e. convergence of processes).\\

In the current paper, we will be mainly interested in \emph{Lebesgue differentiation processes} $\bT=\bT_{\bR}$ on $L^p(\R^2)$ associated to sequences $\bR=(R_k)$ of rectangles in the plane centered at the origin, the diameters of which tend to zero, \emph{i.e.} in the case where one has, for each $k\in\N$:
$$
T_kf(x):=\frac{\chi_{R_k}}{|R_k|}*f(x)=\frac{1}{|R_k|}\int_{x+R_k} f.
$$
Note that we shall here typically work in the case where the classical Lebesgue differentiation theorem does \emph{not} provide the a.e. convergence of $T_kf$ to $f$ for $f\in L^1(\R^2)$ or even in $L^p(\R^d)$ for $p>1$: one may remember, indeed, that the Lebesgue differentiation theorem provides a.e. convergence of $T_kf$ to $f$  for each $f\in L^1(\R^2)$ in this context if $\bR$ is a sequence of \emph{squares}, or in case the ratio between the two side-lengths of $R_k$, called its \emph{shape}, remains away (uniformly) from $0$ and $\infty$, and that it follows from Jessen, Marcinkiewicz and Zygmund \cite{JMZ} that this a.e. convergence holds for any $f\in L\log L(\R^2)$ (and hence also in $L^p(\R^2)$ for any $p>1$) if $\bR$ is a sequence of rectangles parallel to the coordinate axes (see \cite{DM17} and, for example, the introduction in \cite{DMZAA} for a review of the different versions one can formulate of the Lebesgue differentiation with rectangles).

In the following paper, we will hence be mainly dealing with sequences $\bR$ of rectangles whose shapes are tending to $0$ (or $\infty$) and which will usually \emph{not} be parallel to the coordinate axes.\\

As we mentioned briefly above, and as it will be made precise below, the a.e. convergence of $(T_k f)$ to $f$ for all $f\in L^p(\R^2)$ ($1\leq p<\infty$) is equivalent to having $T^*f<+\infty$ almost everywhere in $\R^2$ for all $f\in L^p(\R^2)$, or the existence of a constant $C>0$ such that for any $f\in L^p(\R^2)$ and any $\lambda>0$, one has:
$$
|\{x\in\R^2 : T^*f(x)>\lambda\}|\leq \frac{C}{\lambda^p}\|f\|_p^p.
$$
If the maximal operator $T^*$ satisfies such an inequality (called weak type $(p,p)$), we shall say that it is \emph{$L^p$-good}; in this case we shall also say that the sequence $\bR$ is $L^p$-good, keeping in mind that it is equivalent to the a.e. convergence of $T_kf$ to $f$ for all $f\in L^p(\R^2)$. If $T^*$ (or $\bR$) is not $L^p$-good, we shall call it \emph{$L^p$-bad}.

It also follows from Hagelstein and Parissis \cite{HP} that the a.e. convergence of $(T_kf)$ to $f$ for all $f\in L^\infty(\R^2)$ is equivalent to the following property: for each $0<\lambda <1$, there exists a constant $C>0$ such that for any Borel set $B\subseteq\R^2$ with finite Lebesgue measure, one has:
\begin{equation}\label{eq.linfini-good}
|\{x\in\R^2: T^*\chi_B(x)>\lambda\}|\leq C_\lambda |B|.
\end{equation}
As before, if the above inequality is satisfied, we shall say that $T^*$ and $\bR$ are \emph{$L^\infty$-good}, meaning in particular that $T_kf$ converges a.e. to $f$ for all $f\in L^\infty(\R^2)$. Again, if $T^*$ or $\bR$ is not $L^\infty$-good, we shall say it is \emph{$L^\infty$-bad}.\\

Here an interesting comparison with \emph{directional maximal operators} is relevant. Given a set $\Omega\subseteq [0,+\infty)$ (thought of as a set of \emph{slopes}), denote by $M_\Omega$ the maximal operator defined for a given function $f$ on $\R^2$ by:
$$
M_\Omega f(x):=\sup \frac{1}{|R|}\int_R |f|,
$$
where the upper bound is extended on all rectangles containing $x$, \emph{one side of which has a slope $\omega\in\Omega$}. Such a maximal operator is called \emph{directional}.

A deep and beautiful result by Bateman \cite{BATEMAN} states a fundamental dichotomy for directional maximal operators; it can be stated in the following way (we here extend the meaning of ``$L^p$-good'' and ``$L^p$-bad'' to the maximal operator $M_\Omega$ in an obvious way, even though it is not associated to a process):
\begin{enumerate}
\item[(i)] either $M_\Omega$ is $L^p$-good for any $1<p\leq\infty$ (in which case we call $\Omega$ a \emph{good set of directions});
\item[(ii)] or $M_\Omega$ is $L^p$-bad for any $1<p\leq\infty$ (in which case we call $\Omega$ a \emph{bad set of directions}).
\end{enumerate}
Note that Bateman's original dichotomy was stated for finite $1<p<\infty$; the observation that $p=\infty$ could be included was made by three of the current paper's authors in \cite{DMR}. Moreover, Bateman gives a geometric characterization of good sets of directions; we refer to his work \cite{BATEMAN} for more details. Yet, we can say here, for example, that geometric sequences like $\Omega:=\{2^{-k}:k\in\N\}$ are examples of good sets of directions, while sets as simple as $\Omega_s:=\{k^{-s}:k\in\N^*\}$ for $s>0$, are examples of bad sets of directions.\\

In the above setting where $\bT=\bT_{\bR}$ is the sequence of Lebesgue averages associated to a sequence $\bR=(R_k)$ of rectangles centered at the origin, the diameters of which tend to zero, we can already formulate an immediate consequence of Bateman's result. To this purpose, denote for each $k\in\N$ by $\omega_{R_k}\in [0,+\infty)$ the slope of the longest side of $R_k$ (in case $R_k$ is a square, denote by $\omega_{R_k}$ the minimum of the two slopes of its sides) and let $\Omega_{\bR}:=\{\omega_{R_k}:k\in\N\}$ the set of those slopes.

If $\Omega_{\bR}$ is a good set of directions (in the sense of Bateman recalled above), then we obviously have, using the same notations as before:
\begin{equation}\label{eq.intro1}
T^*\leq M_{\Omega_{\bR}},
\end{equation}
from which it immediately follows that $T^*$ and $\bR$ are $L^p$-good for any $1<p\leq\infty$. It is then also the case that $T_kf$ converges a.e. to $f$ for every $f\in L^p(\R^2)$. In this case, the problem we stated initially hence has a trivial (positive) solution.

Yet in case $\Omega_{\bR}$ is a \emph{bad} set of directions, inequality \eqref{eq.intro1} does not provide any information on the good or bad character of $T^*$; roughly speaking, the process $T^*$ ``extracts'' from $M_{\Omega_R}$ the smallest possible amount of rectangles still providing a differentiation scheme, but may not capture anymore the geometry of rectangles that makes $M_{\Omega_R}$ bad.

It is precisely the main focus of our paper to provide an answer to the following (rather vague) question: assuming $\Omega$ is a bad set of directions like $\Omega_s:=\{k^{-s}:k\in\N^*\}$ for $s>0$, can one still provide examples of sequences $\bR$ of rectangles as above satisfying $\Omega_{\bR}=\Omega$ and for which the Lebesgue differentiation process $\bT_{\bR}$ is still $L^\infty$-bad?

As we shall explain in the next section, presenting our paper's results, the answer to the latter is positive. On the way to answering it, we shall also present other results concerning $L^1$-good and $L^p$-good processes associated to sequences of rectangles of the above type.\\

A last remark should be made concerning the fact that we insist on working with differentiation \emph{processes} here, in opposition to differentiation \emph{bases} (see de Guzmán \cite{GUZMAN} for the terminology and the precise definitions, on which we do not want to insist here): while to any differentiation process of the type $\bT_{\bR}$ for some sequence of rectangles $\bR$, corresponds a centered, translation-invariant differentiation basis $$
\calB_{\bR}=\bigcup_{x\in\R^2}\calB_{\bR}(x):=\bigcup_{x\in\R^2}\{x+R_k: k\in\N\},
$$
one should insist on the fact that a differentiation basis $\calB$ can, at a given point $x\in\R^2$, have an uncountable class of admissible sets $\calB(x)$, even more so when $\calB$ is a Busemann-Feller-type basis (\emph{i.e.} satisfies that, given $B\in\calB$, one has $B\in\calB(x)$ if and only if $B\ni x$).

Observe that if $\Omega$ is a set of directions in the plane (see above), the set $\calB_\Omega$ of all rectangles oriented along one direction $\omega\in \Omega$ is a translation-invariant Busemann-Feller basis called a \emph{directional basis}.\\

When a translation-invariant differentiation basis $\calB=\bigcup_{x\in\R^d}\calB(x)$ in $\R^d$ consists of convex sets \emph{and is known to differentiate $L^\infty(\R^2)$} (in which case it is called a \emph{density basis}), it was shown by G.~Oniani (see \cite[Remark~7]{ONIANI}) that for any $1\leq p<\infty$, $\calB$ differentiates $L^1(\R^d)\cap L^p(\R^d)$ if and only if $\calB^*$ does, where $\calB^*=\bigcup_{x\in\R^d}\calB^*(x)$ is the Busemann-Feller basis associated to $\calB$ defined for $x\in\R^d$ by $\calB^*(x):=\{B\in\calB:B\ni x\}$.\\
 
When $\calB$ fails to differentiate $L^\infty(\R^d)$ (as is the case when $d=2$ for directional bases $\calB_\Omega$ associated to a bad sequence of directions $\Omega=\{\omega_k:k\in\N\}$), the latter equivalence is \emph{not} true anymore (see Hagelstein and Parissis \cite{HP}); our class of examples will show that, under some conditions on the sequence $(\omega_k)$, one can nevertheless construct a sequence $\bR:=(R_k)$ of rectangles $R_k$ oriented along direction $\omega_k$, for which the associated process $T_{\bR}$ is $L^\infty$-bad, hence extracting ``ordered'' bad \emph{processes} from the bad directional basis $\calB_\Omega$.\\

Let us now formulate our results in a more precise way.

\section{Results}

In this whole section, we use the same notations as in the introduction, and associate to any sequence $\bR=(R_k)$ of rectangles in the plane centered at the origin, the diameters of which tend to zero, a Lebesgue differentiation process $\bT=\bT_{\bR}=(T_k)$.

A first observation in the following paper (to which Section~\ref{sec.L1} is devoted) will be to provide a geometrical condition on $\bR$ ensuring that $\bT_{\bR}$ is $L^1$-good. More precisely, we shall prove the following result.

\begin{Theorem}\label{THM1}
If there exists $\alpha>0$ such that for all $k\in\N$ and all $l\in\N$ satisfying $l>k$, one has $$R_k-R_l\subseteq \alpha R_k$$ then the process $\bT$ is $L^1$-good.
\end{Theorem}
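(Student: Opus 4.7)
The plan is to prove that the associated maximal operator $T^*$ satisfies an inequality of weak type $(1,1)$; combined with the pointwise convergence $T_kf(x)\to f(x)$ for every continuous compactly supported $f$ (which holds because $\diam R_k\to 0$), the Stein--Burkholder--Sawyer principle recalled in the introduction will then yield the $L^1$-goodness of $\bT$.

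The starting point is a geometric engulfing lemma derived from the hypothesis. Observe that, since each $R_k$ is centrally symmetric about the origin, one has $-R_l=R_l$, hence $R_k-R_l=R_k+R_l$; using $0\in R_k$, the hypothesis also gives $R_l = 0-(-R_l)\subseteq R_k-R_l\subseteq \alpha R_k$ for every $l>k$. I would then establish the following cross-level engulfing property: \emph{if $k\leq l$ and $(x+R_k)\cap(y+R_l)\neq\emptyset$, then $y+R_l\subseteq x+\beta R_k$, where $\beta:=\max(3,2\alpha)$.} Indeed, when $k<l$, any point in the intersection yields $y-x\in R_k-R_l\subseteq\alpha R_k$, and so, using $R_l\subseteq\alpha R_k$ together with the identity $\alpha R_k+\alpha R_k=2\alpha R_k$ (valid by convexity), one gets $y+R_l\subseteq x+\alpha R_k+R_l\subseteq x+2\alpha R_k$; the case $k=l$ is the classical Vitali observation $R_k-R_k=2R_k$, giving the tripling factor.

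Fix now $f\in L^1(\R^2)$ and $\lambda>0$. Set $E_\lambda:=\{T^*f>\lambda\}$ and, for each $x\in E_\lambda$, let $k(x):=\min\{k\in\N:A_k(x)>\lambda\}$, where $A_k(x):=|R_k|^{-1}\int_{x+R_k}|f|$. From the family $\calF:=\{x+R_{k(x)}:x\in E_\lambda\}$, I would extract a pairwise disjoint subfamily $(x_j+R_{k_j})_j$ whose $\beta$-dilates cover $E_\lambda$, by a greedy selection processing rectangles in increasing order of their index (level by level), invoking at each fixed level a standard single-shape Vitali covering lemma to handle rectangles sharing the same index. The engulfing lemma above guarantees that any rectangle rejected along the process is contained in the $\beta$-dilate of a previously selected rectangle. (To bypass set-theoretic and measure-theoretic subtleties arising from a possibly uncountable $\calF$, the extraction would be performed first on compact subsets of $E_\lambda$ on which $k(x)$ is bounded, and then conclusions passed to the limit.) The disjointness of the selected rectangles together with the defining inequality $|R_{k_j}|<\lambda^{-1}\int_{x_j+R_{k_j}}|f|$ finally yields
$$
|E_\lambda|\leq \sum_j |\beta R_{k_j}|=\beta^2\sum_j |R_{k_j}|\leq \frac{\beta^2}{\lambda}\|f\|_1,
$$
which is the required weak type $(1,1)$ inequality.

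The main technical obstacle lies in the Vitali-type extraction: the collection $\calF$ involves translates of infinitely many different shapes, so the standard single-shape Vitali argument does not apply directly, and a priori there is no uniform comparability between the $R_k$'s. The role of the hypothesis is precisely to provide the cross-level engulfing missing in the general rectangular setting, reducing the covering problem to a nested sequence of single-shape extractions processed in the natural order given by the indexing.
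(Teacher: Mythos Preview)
Your argument is correct, and the outline is complete: the engulfing lemma is sound (central symmetry gives $R_k-R_l=R_k+R_l\supseteq R_l$, and convexity gives $\alpha R_k+\alpha R_k=2\alpha R_k$), and the level-by-level Vitali extraction works as you describe---the reduction to compact subsets $K\subseteq E_\lambda$, on which $k(x)$ is bounded because each $\{A_k>\lambda\}$ is open, handles the set-theoretic issues cleanly.

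The paper, however, takes a different and less hands-on route. Rather than running a covering argument directly, it observes that the hypothesis forces the \emph{correct factors}
$Q_k:=\bigl|\bigcup_{l\geq k}(R_k-R_l)\bigr|$
of Rosenblatt and Wierdl to have linear growth, $Q_k\leq C|R_k|$ (this is the easy direction of the paper's lemma characterizing ``almost nested'' sequences), and then invokes as a black box the Rosenblatt--Wierdl theorem that the \emph{corrected} maximal operator
$\tilde{T}^*f:=\sup_k Q_k^{-1}\bigl|\int_{\cdot+R_k}f\bigr|$
always has weak type $(1,1)$; the linear bound on $Q_k$ gives $T^*\leq C\,\tilde{T}^*$ and the result follows at once. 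Your approach is more self-contained---it bypasses the Rosenblatt--Wierdl result and in effect reproves the relevant instance of it via a direct covering---while the paper's route embeds the theorem in the correct-factor framework it goes on to exploit for the $L^p$ corrected inequality in the next section.
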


The latter geometric condition appearing in Theorem~\ref{THM1} should be thought of as a kind of \emph{nesting} property of the sequence $(R_k)$. The proof of Theorem~\ref{THM1} will rely on a geometric interpretation of the notion of \textit{correct factors} $Q_k$, $k\in\N$ of the sequence $\bR$ introduced by Rosenblatt and Wierdl in \cite{RW} and defined for $k\in\N$ by:
$$
Q_k:=\left|\bigcup_{l=k}^\infty R_k-R_l\right|.
$$
The most important property enjoyed by the correct factors is the following: if one has $Q_k<+\infty$ for any $k\in\N$, then the \emph{corrected maximal operator} $\tilde{T}^*$ defined for a given locally integrable $f$ and $x\in\R^2$ by:
$$
\tilde{T}^*f(x):=\max_{k\in\N} \left| \frac{|R_k|}{Q_k} T_kf(x)\right|=\max_{k\in\N}  \left|\frac{1}{Q_k}\int_{x+R_k} f\right|,
$$
always has weak type $(1,1)$. It is that property, combined with the geometric hypothesis made on $(R_k)$ in the statement of Theorem~\ref{THM1}, that will ensure $\bT_{\bR}$ to be $L^1$-good.\\

In section~\ref{sec.Lp}, we provide a continuous analogous result to the discrete ``corrected'' weak-type $(p,p)$ suggested by Rosenblatt and Wierdl in \cite[p.~551]{RW}. More precisely, we show the following result.
\begin{Theorem}\label{thm.Lp}
Fix $1\leq p<\infty$ and assume that one has $Q_k<\infty$ for each $k\in\N$. Define, for each $k\in\N$, the $k^{\text{th}}$ $L^p$-correct factor (of the sequence $\bR$) $W_{k,p}$ by letting $W_{k,1}:=Q_k$ and, in case $1<p<\infty$, by letting:
$$
W_{k,p}:=(Q_k)^{\frac 1p }|I_k|^{\frac{1}{p'}},
$$
where $1<p'<\infty$ is the conjugate exponent to $p$ satisfying $\frac 1p + \frac{1}{p'}=1$. Under those assumptions, the corrected maximal operator $\tilde{T}_p^*$ defined for any locally integrable $f$ and any $x\in\R^2$ by:
$$
T_p^*f(x):=\max_{k\in\N} \left|\frac{|R_k|}{W_{k,p}} T_kf(x)\right|=\max_{k\in\N}  \left|\frac{1}{W_{k,p}}\int_{x+R_k} f\right|,
$$
has weak type $(p,p)$.
\end{Theorem}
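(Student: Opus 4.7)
The plan is to reduce the weak type $(p,p)$ inequality for $T_p^*$ to the weak type $(1,1)$ inequality for the ($L^1$-)corrected maximal operator $\tilde{T}^*$ recalled in the discussion following Theorem~\ref{THM1}. Indeed, for $p=1$ the statement is nothing but the Rosenblatt--Wierdl fact that $\tilde T^*$ has weak type $(1,1)$, so the content lies in the range $1<p<\infty$. The whole argument will rest on a pointwise domination of the form
$$
\bigl(T_p^* f(x)\bigr)^p \leq \tilde{T}^*\bigl(|f|^p\bigr)(x),\quad x\in\R^2,
$$
coming from a single application of H\"older's inequality whose conjugate exponents are tailored to match the very definition of $W_{k,p}$.

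To establish this pointwise inequality, I would fix $k\in\N$ and $x\in\R^2$ and apply H\"older's inequality with conjugate exponents $p$ and $p'$ to the integral $\int_{x+R_k}|f|$, giving
$$
\int_{x+R_k}|f| \leq |R_k|^{1/p'}\left(\int_{x+R_k}|f|^p\right)^{1/p}.
$$
Raising both sides to the $p^{\text{th}}$ power and dividing by $W_{k,p}^p = Q_k\,|R_k|^{p/p'}$ (reading $I_k=R_k$ in the definition of $W_{k,p}$) yields
$$
\left(\frac{1}{W_{k,p}}\int_{x+R_k}|f|\right)^{p} \leq \frac{|R_k|^{p/p'}}{Q_k\,|R_k|^{p/p'}}\int_{x+R_k}|f|^p = \frac{1}{Q_k}\int_{x+R_k}|f|^p.
$$
Taking the maximum over $k\in\N$ on both sides, and using that $\left|\int g\right|\leq\int|g|$, produces the pointwise domination displayed above.

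With the domination in hand, the weak type inequality follows in one line. Fixing $f\in L^p(\R^2)$ and $\lambda>0$, the function $|f|^p$ belongs to $L^1(\R^2)$ with $\bigl\||f|^p\bigr\|_1 = \|f\|_p^p$, so the weak type $(1,1)$ bound for $\tilde{T}^*$ applied at level $\lambda^p$ furnishes a constant $C>0$ such that
$$
\bigl|\{x : T_p^* f(x) > \lambda\}\bigr| = \bigl|\{x : (T_p^*f(x))^p > \lambda^p\}\bigr| \leq \bigl|\{x : \tilde{T}^*(|f|^p)(x) > \lambda^p\}\bigr| \leq \frac{C}{\lambda^p}\|f\|_p^p,
$$
which is exactly the desired weak type $(p,p)$ inequality for $T_p^*$.

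Since the entire reduction is a single H\"older application, I do not expect any genuine obstacle here. The only mild subtlety is to recognise that the factor $|R_k|^{1/p'}$ in the definition of $W_{k,p}$ has been placed precisely so as to cancel the corresponding factor produced by H\"older's inequality, leaving behind only the $L^1$-correct factor $Q_k$; this is what permits us to feed the problem back into the Rosenblatt--Wierdl weak type $(1,1)$ result, rather than having to revisit its proof.
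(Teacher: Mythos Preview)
Your argument is correct, and it is actually cleaner than the paper's. The paper does \emph{not} reduce to the Rosenblatt--Wierdl weak $(1,1)$ result; instead it reruns the selection/covering argument behind that theorem from scratch, inserting H\"older's inequality at each step. Concretely, the paper fixes $N$, runs a greedy selection producing centers $x_i$ and indices $k_i$ with the $x_i+R_{k_i}$ pairwise disjoint, applies H\"older to each selected rectangle to obtain $\int_{x_i+R_{k_i}}|f|^p>\lambda^p|B_i|$ with $|B_i|=Q_{k_i}$, and then sums over $i$ using disjointness. This yields the weak $(p,p)$ estimate with constant $1$.

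Your route replaces that entire machinery by the single pointwise inequality $\bigl(T_p^*f\bigr)^p\le \tilde T^*\bigl(|f|^p\bigr)$ and an appeal to the already-stated Theorem~\ref{thm.RW}. The trade-off is only in the constant: you inherit the constant $C$ from Theorem~\ref{thm.RW}, whereas the paper's direct argument gives $C=1$. Conceptually your proof makes transparent exactly why the factor $|R_k|^{1/p'}$ belongs in $W_{k,p}$, which the paper's version somewhat obscures inside the selection procedure.
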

It will also be observed that it is not obvious, in case $p>1$, to get a sufficient geometric condition on $\bR$ ensuring that $\bT_{\bR}$ is a $L^p$-good process, using the latter theorem.\\

In sections~\ref{sec.Linfini1} and \ref{sec.Linfini2}, we will fix an nondecreasing sequence $\bb=(b_k)$ of positive real numbers satisfying $b_0=0$ and define points $B_k:=(b_k,0)$ for all $k\in\N$ and let $A:=(0,1)$; for each $k\in\N^*$ we then denote by $\Delta_k$ the triangle $AB_{k-1}B_{k}$. For each $k\in\N^*$, we associate to $\Delta_k$ a rectangle $P_k$ centered at the origin, obtained by translating a rectangle oriented along $AB_{k}$ and containing the image of $\Delta_k$ by a homothecy based at $A$ with ratio $\frac 32$ ({see Figure~\ref{fig.Delta_k} for an overview of the situation}, and section~\ref{sec.Linfini2} for a precise description). Finally we fix any sequence $\boldsymbol{\delta}=(\delta_k)_{k\in\N^*}$ of positive real numbers and we consider the sequence of rectangles $\bR=\bR(\bb,\bdelta)=(R_k)_{k\in\N^*}$ defined by $R_k:=\delta_n P_k$ if $k\in\N^*$ satisfies $2^n\leq k<2^{n+1}$ for a given $n\in\N$; we call $\bdelta$ an \emph{admissible sequence} in case one has $\diam R_k\to 0$, $k\to\infty$.
\begin{figure}
\begin{center}
\includegraphics[width=12cm]{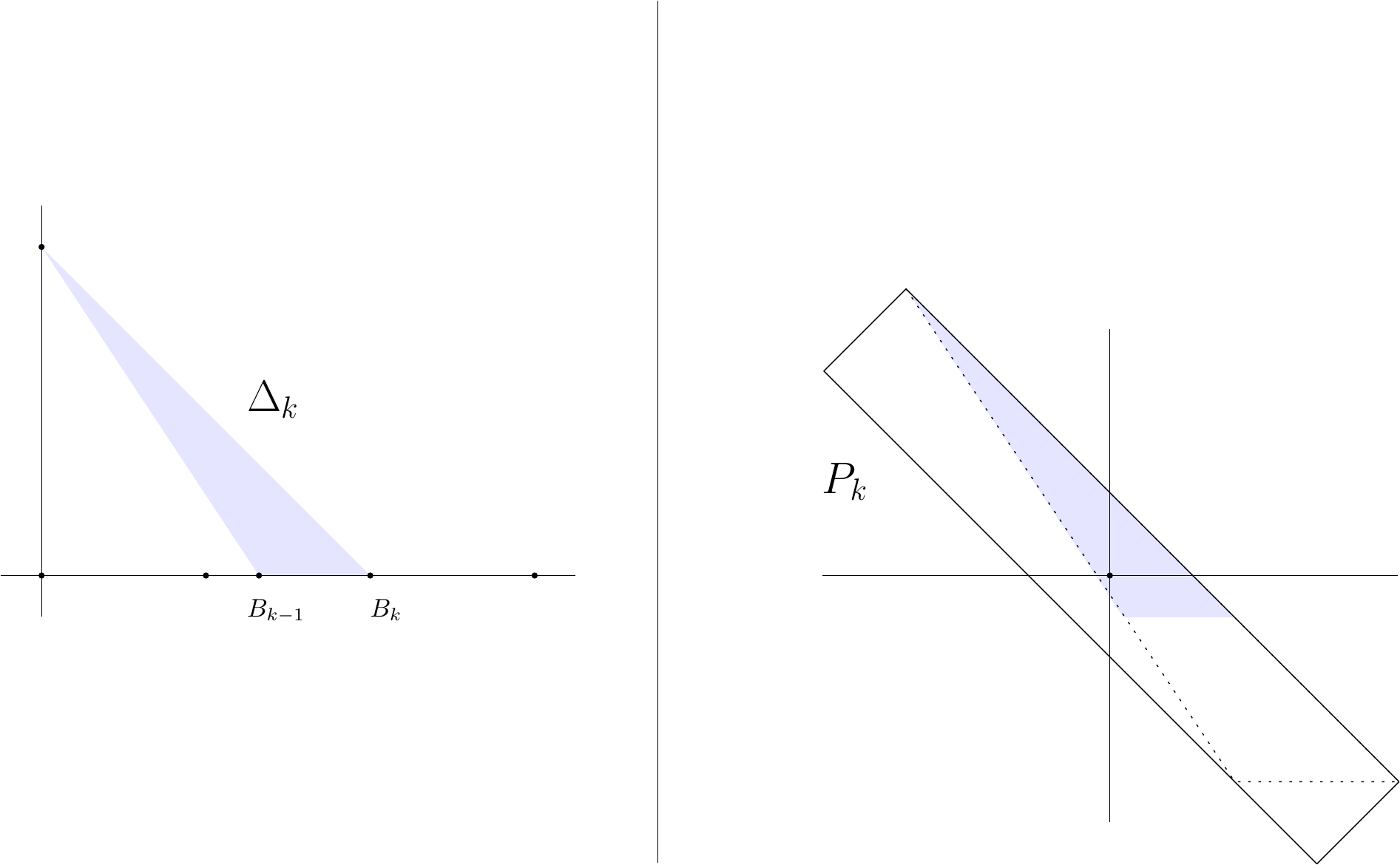}
\caption{The triangle $\Delta_k$ and the rectangle ${P}_k$}\label{fig.Delta_k}
\end{center}
\end{figure}

\begin{Theorem}\label{THM2}
Assume that the sequence $\boldsymbol{b}$ satisfies the following two conditions:
\begin{enumerate}
\item[(i)] there exists a constant $c>0$ such that one has $1 + b_{k-1}^2 \geq c(b_{k} -b_{k-1})^2$ for all $k\in\N^*$;
\item[(ii)] one has: $$G_{ \boldsymbol{b} }:=\sup_{n\in\N}\sup_{1\leq l\leq n} \left(\frac{b_{n+2l}-b_{n+l}}{b_{n+l}-b_n}+\frac{b_{n+l}-b_n}{b_{n+2l}-b_{n+l}}\right)<+\infty.$$
\end{enumerate}
Under those assumptions, the process $\bT_{\bR}$ associated to $\bR = \bR(\bb,\boldsymbol{\delta})$ is $L^\infty$-bad for any admissible sequence $\bdelta$.
\end{Theorem}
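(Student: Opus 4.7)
The plan is to apply the Hagelstein--Parissis criterion \eqref{eq.linfini-good}: it is enough to exhibit a fixed $\lambda\in(0,1)$ and a sequence of Borel sets $(B_N)_{N\in\N}\subseteq\R^2$ of finite positive measure satisfying
$$
\frac{|\{x\in\R^2:T^*\chi_{B_N}(x)>\lambda\}|}{|B_N|}\xrightarrow{N\to+\infty}+\infty.
$$
I would produce each $B_N$ by a Perron-tree-type construction localized to a single dyadic block $[2^m,2^{m+1})$ for some large $m=m(N)$.

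Two observations make such a localization legitimate. First, within a single block $[2^m,2^{m+1})$ all $2^m$ rectangles $R_k=\delta_m P_k$ share the common scaling $\delta_m$; by scale-invariance of planar measure ratios we may equivalently work at scale $1$ with the unscaled rectangles $P_k$, and rescale the resulting construction by $\delta_m$ at the end. Second, since $T^*f=\sup_k|T_kf|$ dominates the supremum taken over any sub-family of indices, it suffices to produce the desired blow-up using only the rectangles with $k\in[2^m,2^{m+1})$. Thus the admissible sequence $\bdelta$ plays no role in the construction (a striking feature of the statement), beyond ensuring $\diam R_k\to 0$ so that $\bT_{\bR}$ is a genuine differentiation process.

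The heart of the argument is, for each large $m$, to exhibit translation vectors $(\tau_k),(\sigma_k)$ indexed by $k\in[2^m,2^{m+1})$ such that, with $V_m:=\sum_k|\Delta_k|$:
\begin{enumerate}
\item[(a)] the set $B_m'':=\bigcup_k(\tau_k+\Delta_k)$ has measure $|B_m''|\le\eps_mV_m$ with $\eps_m\to 0$;
\item[(b)] each $\sigma_k+P_k$ contains $\tau_k+\Delta_k$, and the sets $(\sigma_k+P_k)_k$ are pairwise essentially disjoint.
\end{enumerate}
Once these are in place, (b) yields $|B_m''\cap(\sigma_k+P_k)|/|P_k|\ge|\Delta_k|/|P_k|\ge\lambda_0$ for a fixed positive geometric constant $\lambda_0$ (depending only on the explicit description of $P_k$ and on the uniform aspect-ratio control given by condition~(i)). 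By continuity this density bound persists on a translate of $P_k$ of measure $\gtrsim|P_k|$, and the disjointness property in (b) adds up these contributions to $|\{T^*\chi_{B_m''}>\lambda\}|\gtrsim\sum_k|P_k|\gtrsim V_m$ for any $\lambda<\lambda_0$. Dividing by the bound in (a) produces the desired blow-up as $\eps_m\to 0$.

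The main obstacle is the construction of (a)--(b). Classical Perron-tree constructions deal with equal-base triangles sharing a common apex; the $\Delta_k$ here share the apex $A=(0,1)$ but have uneven bases $[b_{k-1},b_k]$. The binary pairing to use is suggested directly by the form of condition~(ii): at the $j$-th merging level of the tree ($0\le j<m$) one pairs triangles separated by $2^j$ indices, and condition~(ii) applied with $l=2^j$ guarantees the comparability of the two consecutive increments $b_{n+l}-b_n$ and $b_{n+2l}-b_{n+l}$ --- precisely the quantities controlled by $G_\bb$. The uniform finiteness of $G_\bb$ then produces a constant-fraction overlap at each of the $m$ pairing levels, whence an exponentially decaying $\eps_m$. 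Condition~(i) simultaneously underpins the uniform lower bound $\lambda_0>0$ and the geometric flexibility that is needed to place the rectangles $\sigma_k+P_k$ so as to remain essentially disjoint throughout the induction --- maintaining both properties jointly through the $m$ levels of the tree is where the full strength of both hypotheses is used.
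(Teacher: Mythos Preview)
Your overall strategy (Hagelstein--Parissis criterion, Perron-tree compression within a single dyadic block, reduction to scale $1$) matches the paper's, and your reading of condition~(ii) as precisely the uniform bound needed to run the Hare--R\"onning merging is correct. The gap lies in your item~(b).

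The two requirements in (b) are incompatible. In a Perron tree the translated triangles $\tau_k+\Delta_k$ overlap heavily---that is exactly how one obtains $|B_m''|\le\eps_m V_m$ with $\eps_m\to 0$. But if $(\sigma_j+P_j)\cap(\sigma_k+P_k)=\emptyset$ while $\sigma_j+P_j\supseteq\tau_j+\Delta_j$ and $\sigma_k+P_k\supseteq\tau_k+\Delta_k$, then any point in $(\tau_j+\Delta_j)\cap(\tau_k+\Delta_k)$ would lie in both rectangles, a contradiction. So no placement $(\sigma_k)$ with these two properties can exist, and ``geometric flexibility'' from condition~(i) cannot rescue this. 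Relatedly, your ``continuity'' step---that the density bound at the single centre $\sigma_k$ persists on a set of centres of measure $\gtrsim|P_k|$---is not justified as stated; continuity gives persistence on \emph{some} open set, with no quantitative lower bound on its measure.

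The paper bypasses both problems by abandoning disjointness entirely. Rather than summing contributions over disjoint rectangles, it attaches to each $\Delta_k$ an explicit trapezium $V_k$ (the portion of the $\tfrac32$-dilate of $\Delta_k$ lying beyond the base $B_{k-1}B_k$) and proves a pointwise lemma: for every $x\in V_k$ one has $|(x+P_k)\cap\Delta_k|\ge c_0|P_k|$, where $c_0=\tfrac{1}{72}\min(\sqrt{c},1)$ and condition~(i) supplies the constant $\sqrt c$. Then $V^n:=\bigcup_k\tau_k V_k$ is shown directly to have measure $\ge\tfrac19|\bigcup_k\Delta_k|$ (the translations $\tau_k$ are horizontal and the trapezia sit in a fixed horizontal strip below the $x$-axis, so $V^n$ still contains a $\tfrac13$-scale copy of the original big triangle). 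This gives $\delta_n V^n\subseteq\{T^*\chi_{\delta_nK^n}\ge c_0\}$ outright, with no disjointness needed, and the ratio $|V^n|/|K^n|\ge\tfrac{1}{9\eps_n}\to\infty$ finishes the proof. The key idea you are missing is this explicit geometric region $V_k$ replacing your unquantified ``persistence'' neighbourhood.
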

\begin{Remark}
In the latter statement, the first condition is merely a technical one, while the finiteness of $G_{\bb}$ is a quantitative way to ensure that the set of directions $\Omega(\bb)$ associated to $\bR$ (being the set of slopes of the segments $AB_{k}$, $k\in\N^*$) defined by:
$$
\Omega(\bb):=\left\{\frac{-1}{b_k}:k\in \N^*\right\},
$$
is a \emph{bad set of directions} (see the introduction above for a precise definition of a \emph{bad} set of directions). The quantity $G_{\bb}$, which one can call the \emph{Perron factor} of $\bb$, was first introduced by Hare and Rönning \cite{HR}; it was also used by A.~Gauvan in his Master's thesis \cite{GAUVANMEMOIRE} in this precise context, and in \cite{GAUVANPT} for providing concrete examples of homothecy-invariant bases of rectangles differentiating all \emph{vs.} no $L^p$ spaces.
\end{Remark}
\begin{Example}
One can verify that for $s>0$, the sequence $\boldsymbol{b}_s = (k^s)$ satisfies the conditions of Theorem \ref{THM2}.
\end{Example}

Let us now detail how one can obtain the $L^1$-a.e. convergence result stated in Theorem~\ref{THM1}.

\section{Almost everywhere convergence in $L^1$}\label{sec.L1}

We keep the notations used before, and associate to any sequence $\bR=(R_k)$ of rectangles in the plane centered at the origin, the diameters of which tend to zero, a Lebesgue differentiation process $\bT=\bT_{\bR}=(T_k)$. Throughout this section we'll omit the index $\bR$, keeping in mind that we shall always be working with the differentiation process $\bT=\bT_{\bR}$ associated to $\bR$.

Recall that one associates to $\bR$ its \emph{correct factors} $Q_k$, $k\in\N$, defined by Rosenblatt and Wierdl in \cite{RW} by letting, for $k\in\N$:
\begin{equation}\label{eq.corr-fact}
Q_k:=\left|\bigcup_{l=k}^\infty R_k-R_l\right|.
\end{equation}
We define the \emph{corrected maximal function} $\tilde{T}^*$ by letting, for a locally integrable function $f$ and $x\in\R^2$
$$
\tilde{T}^*f(x):=\sup_{k\in\N} \left|\frac{1}{Q_k} \int_{x+I_k} f\right|.
$$
The following theorem is taken from Rosenblatt and Wierdl \cite[Theorem~5.11]{RW} and explains the term ``correct factor''.
\begin{Theorem}\label{thm.RW}
There exists $C>0$ such that for all $f\in L^1(\R^2)$ and all $\lambda>0$, one has:
$$
|\{x\in\R^2: \tilde{T}^*f(x)>\lambda\}|\leq\frac{C}{\lambda} \|f\|_1.
$$
\end{Theorem}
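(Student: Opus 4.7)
The plan is a Vitali-type covering argument tailored to the geometry built into the correct factors $Q_k$.

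Fix $f\in L^1(\R^2)$ and $\lambda>0$, and set $E_\lambda:=\{x\in\R^2:\tilde T^*f(x)>\lambda\}$. Since each $x\mapsto\frac{1}{Q_k}\int_{x+R_k}f$ is continuous in $x$ (by $L^1$-continuity of translations), $\tilde T^*$ is lower semicontinuous, so $E_\lambda$ is open; by inner regularity it suffices to bound $|K|$ for an arbitrary compact $K\subseteq E_\lambda$. For each $x\in K$ I pick $k(x)\in\N$ with $\bigl|\int_{x+R_{k(x)}}f\bigr|>\lambda Q_{k(x)}$; the open rectangles $x+\inte R_{k(x)}$ form an open cover of $K$, from which I extract a finite subcover, relabel, and reorder to obtain centres $x_1,\dots,x_N$ and indices $k_1\leq k_2\leq\dots\leq k_N$ with $K\subseteq\bigcup_{i=1}^N B_i$, where $B_i:=x_i+R_{k_i}$ and $\bigl|\int_{B_i}f\bigr|>\lambda Q_{k_i}$.

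Next, I apply the standard greedy Vitali rule to $\{B_i\}$: set $i_1:=1$ and, inductively, let $i_{m+1}$ be the smallest $j>i_m$ such that $B_j$ is disjoint from every previously selected $B_{i_l}$, stopping when no such $j$ remains. The key geometric observation is that an unselected $B_j$ must meet some selected $B_{i_l}$ with $k_{i_l}\leq k_j$; picking $z\in B_j\cap B_{i_l}$ and using that the rectangles are centred and symmetric ($-R_k=R_k$), one has, for \emph{any} $y\in B_j$,
\[
y-x_{i_l}=(z-x_{i_l})+(y-x_j)-(z-x_j)\in R_{k_{i_l}}+R_{k_j}+R_{k_j}=R_{k_{i_l}}+2R_{k_j},
\]
where $R_{k_j}+R_{k_j}=2R_{k_j}$ by convexity. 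Since $0\in R_{k_{i_l}}$ and $R_{k_{i_l}}$ is convex, $R_{k_{i_l}}\subseteq 2R_{k_{i_l}}$, so
\[
R_{k_{i_l}}+2R_{k_j}\subseteq 2(R_{k_{i_l}}+R_{k_j})=2(R_{k_{i_l}}-R_{k_j})\subseteq 2U_{k_{i_l}},
\]
where $U_k:=\bigcup_{l\geq k}(R_k-R_l)$ satisfies $|U_k|=Q_k$. Hence $B_j\subseteq x_{i_l}+2U_{k_{i_l}}$; since the same inclusion trivially holds for selected $j$, I get $K\subseteq\bigcup_l\bigl(x_{i_l}+2U_{k_{i_l}}\bigr)$.

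Using $|2U_k|=4Q_k$ in $\R^2$ by scaling and the pairwise disjointness of the selected $\{B_{i_l}\}$, the weak-type $(1,1)$ estimate follows by a routine summation:
\[
|K|\leq\sum_l|2U_{k_{i_l}}|=4\sum_l Q_{k_{i_l}}<\frac{4}{\lambda}\sum_l\Bigl|\int_{B_{i_l}}f\Bigr|\leq\frac{4}{\lambda}\int\sum_l\chi_{B_{i_l}}|f|\leq\frac{4}{\lambda}\|f\|_1,
\]
so one may take $C=4$. The main delicate point I expect to spend effort on is the algebraic enlargement from $U_k$ to $2U_k$: this is what allows passing from the immediate containment of the \emph{centres} $x_j$ in $x_{i_l}+U_{k_{i_l}}$ to the containment of the full rectangles $B_j$, at the price of a bounded geometric constant, and it is precisely where the definition of $Q_k$ as the measure of a union of \emph{differences} $R_k-R_l$ (rather than of rectangles themselves) pays off. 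Everything else—lower semicontinuity, greedy Vitali selection, and $\sum_l\chi_{B_{i_l}}\leq 1$—is standard bookkeeping.
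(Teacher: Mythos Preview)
Your proof is correct. The paper itself does not prove this statement---it is quoted from Rosenblatt and Wierdl \cite{RW}---but the paper does reproduce essentially the same Vitali-type selection argument when proving the $L^p$ generalization (Theorem~\ref{RWp}); your argument matches that approach in spirit. The only cosmetic difference is that the paper's version selects centres $x_i$ iteratively and covers the remaining \emph{centres} directly by $x_i+U_{k_i}$ (yielding $C=1$), whereas you first pass to a finite subcover and then enlarge to $2U_{k_{i_l}}$ in order to swallow the full rectangles $B_j$ (yielding $C=4$); both rest on the same observation that if $(x+R_j)\cap(x_i+R_{k_i})\neq\emptyset$ with $j\geq k_i$, then $x-x_i\in R_{k_i}-R_j\subseteq U_{k_i}$.
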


As an easy consequence, this yields a sufficient condition for a sequence of rectangles $\bR$ to be $L^1$-good. 
\begin{Theorem}\label{thm.cond-suff} If there exists a constant $C>0$ such that, for all $k\in\N$, one has $Q_k\leq C|R_k|$, then there exists $C'>0$ so that one has, for all $f$ and $\lambda>0$:
$$
|\{x\in\R^2: T^* f(x)>\lambda\}|\leq\frac{C'}{\lambda} \|f\|_1,
$$
where $T^*$ is the maximal operator associated to $\bT$.
\end{Theorem}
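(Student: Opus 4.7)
The proof is essentially a direct pointwise comparison between the usual maximal operator $T^*$ and the corrected maximal operator $\tilde{T}^*$, followed by an application of Theorem~\ref{thm.RW}.

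The plan is as follows. First, I would rewrite $T^*f$ in terms of $\tilde{T}^*f$ by inserting the ratio $Q_k/|R_k|$: for any locally integrable $f$ and any $x\in\R^2$,
$$
T^*f(x)=\sup_{k\in\N}\left|\frac{1}{|R_k|}\int_{x+R_k}f\right|=\sup_{k\in\N}\frac{Q_k}{|R_k|}\left|\frac{1}{Q_k}\int_{x+R_k}f\right|.
$$
Under the hypothesis $Q_k\leq C|R_k|$, each factor $Q_k/|R_k|$ is bounded above by $C$, so pulling this uniform bound out of the supremum yields the pointwise inequality
$$
T^*f(x)\leq C\,\tilde{T}^*f(x),\qquad x\in\R^2.
$$

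Next, I would translate this into the desired weak-type estimate. For any $\lambda>0$, the pointwise bound gives the inclusion
$$
\{x\in\R^2:T^*f(x)>\lambda\}\subseteq\left\{x\in\R^2:\tilde{T}^*f(x)>\tfrac{\lambda}{C}\right\}.
$$
Applying Theorem~\ref{thm.RW} with threshold $\lambda/C$ in place of $\lambda$ yields, with the constant $C_0>0$ coming from that theorem,
$$
|\{x\in\R^2:T^*f(x)>\lambda\}|\leq\frac{C_0}{\lambda/C}\|f\|_1=\frac{C_0 C}{\lambda}\|f\|_1,
$$
so the conclusion holds with $C':=C_0 C$.

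There is no serious obstacle here: the argument is just the observation that the ``corrected'' normalization $1/Q_k$ and the ``true'' normalization $1/|R_k|$ differ by a uniformly bounded factor whenever the geometric hypothesis $Q_k\leq C|R_k|$ holds, and weak-type $(1,1)$ inequalities are insensitive to composition with a bounded multiplier. The content of the theorem therefore lies entirely in Theorem~\ref{thm.RW}, which is cited from~\cite{RW}; the role of this corollary is to isolate a clean geometric criterion (comparability of $Q_k$ with $|R_k|$) guaranteeing that $\bR$ is $L^1$-good.
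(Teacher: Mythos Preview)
Your proof is correct and follows exactly the same approach as the paper: the paper's proof simply notes that the hypothesis gives $\eta:=\inf_k |R_k|/Q_k>0$ and then invokes Theorem~\ref{thm.RW}, which is precisely the pointwise comparison $T^*f\leq C\,\tilde{T}^*f$ you have spelled out in detail.
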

\begin{Remark}\label{rmq.thm-cond-suff}
It follows immediately from the Sawyer-Stein's principle (see above in the introduction) that, under the hypotheses of the latter theorem, $\bR$ yields an $L^1$-good process $\bT$.
\end{Remark}
\begin{proof}
By hypothesis, one has $\eta:=\inf_k \frac{|R_k|}{Q_k}>0$. The theorem then follows immediately from Rosenblatt and Wierdl's result (Theorem~\ref{thm.RW} above).
\end{proof}

We now have a closer look at what the hypothesis in the latter theorem (\emph{i.e.} the growth of the correct factor at a comparable speed to that of the rectangle's areas) means geometrically. To that purpose, a first simple geometrical observation will be useful.

\subsection*{Differences of symmetric rectangles}\label{sec.geom-obs}
Fix real numbers $0<\ell\leq L$, $0< \ell'\leq L'$, assume that one has $L'\leq L$, define a rectangle $R=[-L/2,L/2]\times [-\ell/2,\ell/2]$ parallel to the axes and denote by $R'$ the rectangle obtained by rotating the rectangle $[-L'/2,L'/2]\times [\ell'/2,\ell'/2]$ of an angle $\omega\neq 0$ around the origin. Denote then by $\widehat{R-R'}=[-\hat{L}/2,\hat{L}/2]\times[-\hat{\ell}/2,\hat{\ell}/2]$ the smallest rectangle parallel to the axes that contains the set $R-R'$ (see Figure~\ref{fig.R-R'}).

\begin{figure}[h]
\begin{center}
\includegraphics[scale=1.2]{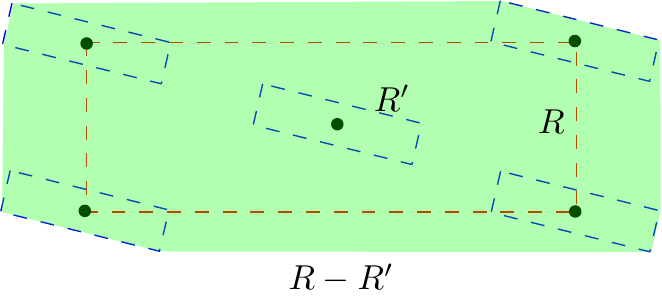}\caption{The rectangles $R$, $R'$ and the set $R-R'$}\label{fig.R-R'}
\end{center}
\end{figure}

It is easy to see, studying the coordinates of points in $(L/2,\ell/2)+R'$, that one has:
\begin{equation}\label{eq.R-R'x}
\hat{L}=L+\max_{\eta,\theta\in [-1,1]}(L'\eta\cos\omega-\ell'\theta\sin\omega)= L+L'|\cos\omega|+\ell'|\sin\omega|,
\end{equation}
and:
\begin{equation}\label{eq.R-R'y}
\hat{\ell}=\ell+\max_{\eta,\theta\in [-1,1]}(L'\eta\sin\omega+\ell'\theta\cos\omega)= \ell+L'|\sin\omega|+\ell'|\cos\omega|.
\end{equation}
Moreover, using the fact that $R-R'$ contains the two parallelograms $P_1$ and $P_2$ represented on Figure~\ref{fig.P1}, we also get:
\begin{equation}\label{eq.R-R'min}
|R-R'|\geq \max(\ell L'|\cos\omega|,LL' |\sin\omega|).
\end{equation}

\begin{figure}[h]
\begin{center}
\includegraphics[scale=0.9]{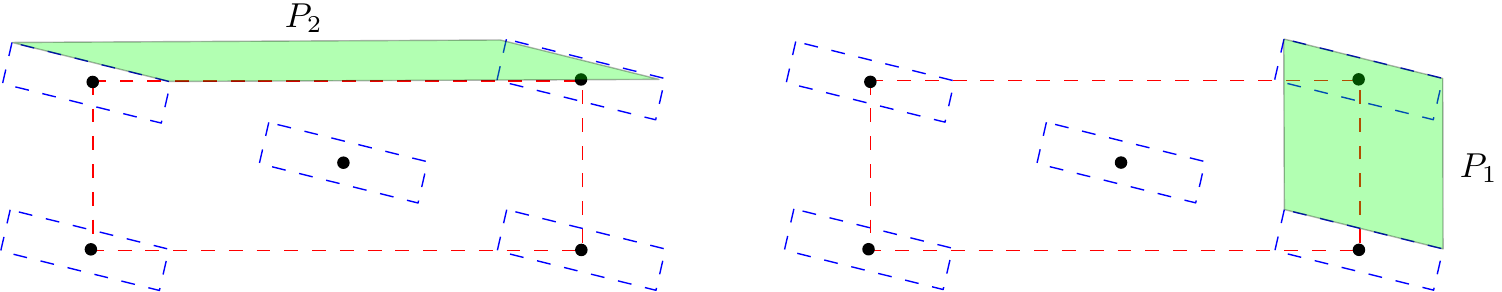}\caption{The parallelograms $P_1$ and $P_2$}\label{fig.P1}
\end{center}
\end{figure}

We are now ready to express in geometrical terms the growth condition on the correct factor appearing in the statement of Theorem~\ref{thm.cond-suff}.

\subsection*{Correct factors and linear growth}
The next lemma expresses is the announced equivalence between the linear growth of the correct factors of $\bR$ and a geometrical property on the sequence $\bR$ itself expressing its ``nested'' behavior.
\begin{Lemma}\label{lem.linear}
Assume that $\bR =(R_k)$ is as before.
The following two properties are equivalent:
\begin{itemize}
\item[(i)] there exists $C>0$ such that one has $Q_k\leq C|R_k|$, where $(Q_k)$ is the sequence of correct factors associated to $(R_k)$ as above;
\item[(ii)] there exists $\alpha>0$ such that for all $k\in\N$ and all $l\in\N$ satisfying $l>k$, one has $R_k-R_l\subseteq \alpha R_k$.
\end{itemize}
\end{Lemma}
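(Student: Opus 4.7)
The implication (ii) $\Rightarrow$ (i) is immediate. Since $R_k$ is centrally symmetric, $R_k - R_k = 2 R_k$; combining with (ii) gives $\bigcup_{l \geq k}(R_k - R_l) \subseteq \max(\alpha,2)\, R_k$, and taking Lebesgue measure yields $Q_k \leq \max(\alpha,2)^2\, |R_k|$, which is (i).

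For (i) $\Rightarrow$ (ii), fix $k\in\N$ and $l > k$; rotating all rectangles of $\bR$ simultaneously (an operation which affects neither the hypothesis nor the conclusion) we may assume $R_k$ is axis-aligned with long side $L_k$ and short side $\ell_k$, and that $R_l$ is obtained from an axis-aligned rectangle of long side $L_l$ and short side $\ell_l$ by rotation around the origin by an angle $\omega = \omega_{k,l}$. Hypothesis (i) then gives $|R_k - R_l| \leq Q_k \leq C L_k \ell_k$. The main geometric step is to strengthen the lower bound (\ref{eq.R-R'min}) by exhibiting two further parallelograms contained in $R_k - R_l$ (obtained, by analogy with $P_1$ and $P_2$, by pairing every edge of $R_k$ with every edge of $R_l$), so as to obtain
$$
|R_k - R_l| \geq \max\bigl\{L_k L_l |\sin\omega|,\ L_k \ell_l |\cos\omega|,\ \ell_k L_l |\cos\omega|,\ \ell_k \ell_l |\sin\omega|\bigr\}.
$$
Confronted with the upper bound $|R_k - R_l| \leq C L_k \ell_k$, this produces the three key inequalities
$$
L_l|\sin\omega| \leq C\ell_k,\qquad \ell_l|\cos\omega| \leq C\ell_k,\qquad L_l|\cos\omega| \leq CL_k.
$$

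It then suffices to plug those inequalities into equations (\ref{eq.R-R'x}) and (\ref{eq.R-R'y}) describing the dimensions of the smallest axis-aligned rectangle $\hat R$ containing $R_k - R_l$, namely
$$
\hat L = L_k + L_l|\cos\omega| + \ell_l|\sin\omega|,\qquad \hat \ell = \ell_k + L_l|\sin\omega| + \ell_l|\cos\omega|,
$$
to deduce $\hat L \leq (1+2C)L_k$ and $\hat\ell \leq (1+2C)\ell_k$, where the bound on $\hat L$ also uses $\ell_l \leq L_l$ through the chain $\ell_l|\sin\omega| \leq L_l|\sin\omega| \leq C\ell_k \leq C L_k$. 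This yields $R_k - R_l \subseteq \hat R \subseteq (1+2C)R_k$, establishing (ii) with $\alpha = 1 + 2C$. The main obstacle is strengthening the lower bound (\ref{eq.R-R'min}), which only displays two out of the four parallelograms needed, by identifying the two missing ones; alternatively, one may invoke the explicit area formula for a Minkowski sum of two rectangles, $|R_k - R_l| = L_k\ell_k + L_l\ell_l + L_k L_l|\sin\omega| + L_k\ell_l|\cos\omega| + \ell_k L_l|\cos\omega| + \ell_k\ell_l|\sin\omega|$, each of whose cross terms furnishes one of the required lower bounds.
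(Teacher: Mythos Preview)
Your argument is correct and takes a genuinely different route from the paper's. Both directions are handled well; in particular your zonotope area formula
\[
|R_k - R_l| = L_k\ell_k + L_l\ell_l + L_kL_l|\sin\omega| + L_k\ell_l|\cos\omega| + \ell_kL_l|\cos\omega| + \ell_k\ell_l|\sin\omega|
\]
is exactly the standard formula for the area of a Minkowski sum of four line segments, and it immediately yields all four parallelogram lower bounds you need.

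The paper argues (i)\,$\Rightarrow$\,(ii) by \emph{contraposition}: assuming no $\alpha$ works, it extracts for each $\alpha>2$ a pair $(k_\alpha,l_\alpha)$ with $R_{k_\alpha}-R_{l_\alpha}\not\subseteq\alpha R_{k_\alpha}$ and then, using only the two parallelogram bounds in \eqref{eq.R-R'min} together with the assumed inequality $\ell_{l_\alpha}\leq L_{l_\alpha}\leq L_{k_\alpha}$, deduces $Q_{k_\alpha}\geq(\alpha-2)|R_{k_\alpha}|$. Your direct argument instead fixes $k,l$ and extracts four cross-term lower bounds, three of which suffice to bound $\hat L$ and $\hat\ell$ by $(1+2C)L_k$ and $(1+2C)\ell_k$ respectively. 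This buys you two things: an explicit constant $\alpha=1+2C$, and independence from any monotonicity of the long sides (you never need $L_l\leq L_k$, which the paper invokes but which is not an immediate consequence of $\diam R_k\to 0$). A small caveat: the paper states \eqref{eq.R-R'x}--\eqref{eq.R-R'min} under the standing hypothesis $L'\leq L$, so when you cite them you are implicitly using that their derivations go through verbatim without that hypothesis --- which they do, but it is worth saying so.
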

\begin{Remark}
In the above statement, property (i) will be referred to by saying that the correct factors of $\bR$ \emph{have linear growth}; property (ii) will be expressed by saying that $\bR$ is \emph{almost nested}.
\end{Remark}
\begin{proof}
That (ii) implies (i) is obvious from the definition of $Q_k$ for $k\in\N$.

To prove that (i) implies (ii), start by choosing, for each $k\in\N$, 
positive real numbers $L_k$ and $\ell_k$ and angles $\theta_k\in (-\pi/2,\pi/2)$ in such a way that $R_k$ is obtained by rotating $[-L_k/2,L_k/2]\times [-\ell_k/2,\ell_k/2]$ around the origin by an angle $\theta_k$. Assume that (ii) does not hold, and hence that for all $\alpha>0$ there exists integers $k_\alpha\in\N$ and $l_\alpha>k_\alpha$ so that $R_{k_\alpha}-R_{l_\alpha}$ is not included in $\alpha R_{k_\alpha}$.

Fix now $\alpha>2$. If one has $\theta_{k_\alpha}=\theta_{l_\alpha}$ then we see that $R_{k_\alpha}-R_{l_\alpha}$ is a rectangle centered at the origin, parallel to $R_{k_\alpha}$ and of sides $L_{k_\alpha}+L_{l_\alpha}$ and $\ell_{k_\alpha}+\ell_{l_\alpha}$ respectively. Since $R_{k_\alpha}-R_{l_\alpha}$ is not included in $\alpha R_{k_\alpha}$ we get either $L_{l_\alpha}+\ell_{l_\alpha}>\alpha L_{k_\alpha}$ or $\ell_{l_\alpha}+\ell_{k_\alpha}>\alpha\ell_{k_\alpha}$, implying in both cases that:
$$
Q_{k_\alpha}\geq |R_{k_\alpha}-R_{l_\alpha}|=(L_{k_\alpha}+L_{l_\alpha})\cdot (\ell_{k_\alpha}+\ell_{l_\alpha})\geq \alpha L_{k_\alpha}\ell_{k_\alpha}=\alpha | R_{k_\alpha}|.
$$

If one has $\theta_{k_\alpha}\neq \theta_{l_\alpha}$, then, after applying a rotation of angle $-\theta_{k_\alpha}$ around the origin to $R_{k_\alpha}$ and $R_{l_\alpha}$, we are in the situation of section~\ref{sec.geom-obs} with $R=R_{k_\alpha}$, $R'=R_{l_\alpha}$ and $\delta=\theta_{l_\alpha}-\theta_{k_\alpha}$. Since $R_{k_\alpha}-R_{l_\alpha}$ is not contained in $\alpha R_{k_\alpha}$, and since $\widehat{R_{k_\alpha}-R_{l_\alpha}}$ is the smallest rectangle centered at the origin, homothetic to $R_{k_\alpha}$ and containing $R_{k_\alpha}-R_{l_\alpha}$, this implies that $\widehat{R_{k_\alpha}-R_{l_\alpha}}$ is not contained in $\alpha R_{k_\alpha}$ and hence by \eqref{eq.R-R'x} and \eqref{eq.R-R'y} that one has either:
$$
\alpha L_{k_\alpha}<L_{k_\alpha}+L_{l_\alpha}|\cos (\theta_{l_\alpha}-\theta_{k_\alpha})|+\ell_{l_\alpha}|\sin (\theta_{l_\alpha}-\theta_{k_\alpha})|,
$$
or:
$$
\alpha \ell_{k_\alpha}<\ell_{k_\alpha}+L_{l_\alpha}|\sin (\theta_{l_\alpha}-\theta_{k_\alpha})|+\ell_{l_\alpha}|\cos (\theta_{l_\alpha}-\theta_{k_\alpha})|;
$$
since one has $\ell_{l_\alpha}\leq L_{\ell_\alpha}\leq L_{k_\alpha}$, those inequalities imply respectively:
$$
L_{l_\alpha}|\cos (\theta_{l_\alpha}-\theta_{k_\alpha})|\geq (\alpha-2) L_{k_\alpha},
$$
or:
$$
L_{l_\alpha}|\sin (\theta_{l_\alpha}-\theta_{k_\alpha})|\geq (\alpha-2) \ell_{k_\alpha}.
$$
In case the first inequality holds, \eqref{eq.R-R'min} implies that one has:
$$
Q_{k_\alpha}\geq |R_{k_\alpha}-R_{l_\alpha}|\geq \ell_{k_\alpha} L_{l_\alpha} |\cos (\theta_{l_\alpha}-\theta_{k_\alpha})|\geq (\alpha-2) L_{k_\alpha}\ell_{k_\alpha}=(\alpha-2)|R_{k_\alpha}|.
$$
In case it is the second of the above inequalities that holds, we get again using \eqref{eq.R-R'min}:
$$
Q_{k_\alpha}\geq |R_{k_\alpha}-R_{l_\alpha}|\geq L_{k_\alpha} L_{l_\alpha} |\sin (\theta_{l_\alpha}-\theta_{k_\alpha})|\geq (\alpha-2) L_{k_\alpha}\ell_{k_\alpha}=(\alpha-2)|R_{k_\alpha}|.
$$
We hence get $Q_{k_\alpha}\geq (\alpha-2)|R_{k_\alpha}|$ in all cases, contradicting the linear growth of the correct factors, that is property (i), since $\alpha$ can be arbitrarily large. The proof is hence complete.
\end{proof}

We are now ready to (re-)state and prove Theorem~\ref{THM1}.
\begin{Theorem}\label{THM1-v2}
Assume that $\bR=(R_k)$ is a sequence of rectangles in $\R^2$ centered at the origin, the diameters of which tend to zero.
If $\bR$ is almost nested, \emph{i.e.} if there exists $\alpha>0$ such that for all $k\in\N$ and all $l\in\N$ satisfying $l>k$, one has $$R_k-R_l\subseteq \alpha R_k$$ then the process $\bT=\bT_{\bR}$ is $L^1$-good.
\end{Theorem}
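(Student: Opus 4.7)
The proof reduces to a short chain of already-established implications. The plan is to observe that the hypothesis has been designed precisely to feed into the machinery built up in Theorem~\ref{thm.RW} and Lemma~\ref{lem.linear}.

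First I would invoke Lemma~\ref{lem.linear}: the almost nested condition is exactly property (ii) of that lemma, so it immediately yields property (i), namely the existence of a constant $C>0$ such that $Q_k\leq C|R_k|$ for every $k\in\N$. Concretely, if $R_k-R_l\subseteq \alpha R_k$ for all $l>k$, then
$$
\bigcup_{l=k}^{\infty}(R_k-R_l)\subseteq \alpha R_k,
$$
and taking Lebesgue measure gives $Q_k\leq \alpha^2 |R_k|$, so the correct factors grow linearly with constant $\alpha^2$.

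Next I would apply Theorem~\ref{thm.cond-suff}: the linear growth of the correct factors just obtained is exactly its hypothesis, and it produces a constant $C'>0$ such that
$$
|\{x\in\R^2 : T^*f(x)>\lambda\}|\leq \frac{C'}{\lambda}\|f\|_1
$$
for every $f\in L^1(\R^2)$ and every $\lambda>0$. This is precisely the weak type $(1,1)$ statement defining $L^1$-goodness of $T^*$, and hence of $\bR$.

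Finally, by the Sawyer--Stein principle recalled in the introduction, this weak type $(1,1)$ bound is equivalent to the almost everywhere convergence of $T_kf$ to $f$ for every $f\in L^1(\R^2)$, which is exactly the statement that the process $\bT=\bT_{\bR}$ is $L^1$-good. There is no real obstacle here, since all the technical work has been carried out in the geometric Lemma~\ref{lem.linear} and in Rosenblatt--Wierdl's Theorem~\ref{thm.RW}; the present theorem is essentially a one-line deduction that merely records the conclusion obtained by chaining those two results with Theorem~\ref{thm.cond-suff}.
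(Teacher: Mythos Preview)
Your proof is correct and follows essentially the same approach as the paper: invoke Lemma~\ref{lem.linear} to pass from the almost nested condition to linear growth of the correct factors, then apply Theorem~\ref{thm.cond-suff} together with the Sawyer--Stein principle (packaged in the paper as Remark~\ref{rmq.thm-cond-suff}) to conclude that $\bT_{\bR}$ is $L^1$-good. The only difference is that you spell out explicitly the trivial direction of Lemma~\ref{lem.linear} with the constant $\alpha^2$, while the paper simply cites the lemma.
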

\begin{proof}
If $\bR$ is almost nested, then it follows from Lemma~\ref{lem.linear} that its correct factors $Q_k$, $k\in\N$, have linear growth. But it then follows from Remark~\ref{rmq.thm-cond-suff} that $\bT_{\bR}$ is $L^1$-good, as we wanted to show.
\end{proof}

For rectangles parallel to the coordinate axes, the correct factors allow us to rephrase a result by A.~Stokolos \cite{STOKOLOS88}.
\begin{Theorem}[Stokolos]\label{thm.stokolos} Assume that $\bR = (R_k)$ is as in Lemma~\ref{lem.linear} \emph{and that moreover all rectangles $R_k$ are parallel to the coordinate axes}. Then the following properties are equivalent:
\begin{itemize}
\item[(i)] $\bT$ is $L^1$-good;
\item[(ii)] $\bR$ can be decomposed into finitely many subsequences along which the correct factor has linear growth;
\item[(iii)] $\bR$ can be decomposed into finitely many subsequences which are almost nested.
\end{itemize}
\end{Theorem}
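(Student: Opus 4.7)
The plan is to prove the chain $(\text{iii})\Leftrightarrow(\text{ii})$, then $(\text{iii})\Rightarrow(\text{i})$, and finally $(\text{i})\Rightarrow(\text{iii})$.

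The equivalence $(\text{ii})\Leftrightarrow(\text{iii})$ follows directly from Lemma~\ref{lem.linear}: apply it to each subsequence $\bR^{(j)}$ of a proposed decomposition, noting that linear growth of the correct factors of $\bR^{(j)}$ (computed from rectangles in $\bR^{(j)}$ alone) is equivalent to $\bR^{(j)}$ being almost nested. Nothing in the proof of Lemma~\ref{lem.linear} used more than a subsequence of rectangles, so the reduction is valid.

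To prove $(\text{iii})\Rightarrow(\text{i})$, suppose $\bR=\bR^{(1)}\sqcup\cdots\sqcup\bR^{(N)}$ with each $\bR^{(j)}$ almost nested. Theorem~\ref{THM1-v2} then yields that each associated maximal operator $T^{(j),*}$ is of weak type $(1,1)$. Since $T^*f\leq\sum_{j=1}^N T^{(j),*}f$, the operator $T^*$ is itself of weak type $(1,1)$, and hence $\bT$ is $L^1$-good by the Sawyer-Stein principle recalled in the introduction.

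The substantive direction is $(\text{i})\Rightarrow(\text{iii})$, where the axis-parallel assumption is essential; I would argue by contrapositive. Since each axis-parallel rectangle $R_k$ centered at the origin is determined by its pair of side-lengths $(L_k,\ell_k)$, and since for such rectangles $R_k-R_l$ is the centered axis-parallel rectangle with side-lengths $L_k+L_l$ and $\ell_k+\ell_l$, the almost-nested condition along a subsequence $(R_{k_i})_i$ is equivalent to the existence of a constant $\beta>0$ with $L_{k_j}\leq\beta L_{k_i}$ and $\ell_{k_j}\leq\beta\ell_{k_i}$ whenever $i<j$. Thus $(\text{iii})$ asserts that the shape-pair sequence $\{(L_k,\ell_k):k\in\N\}$, equipped with its natural indexing, admits a Dilworth-type decomposition into finitely many chains along which both coordinates are, up to a multiplicative constant, non-increasing. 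Assuming this fails, one extracts for every $N\in\N^*$ a finite configuration of $N$ rectangles whose side-length pairs form a combinatorial obstruction that no $\beta$-nested packing can accommodate.

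The main obstacle is converting such obstructions into an actual violation of weak type $(1,1)$. The plan is to implement a Perron-tree style construction tailored to axis-parallel rectangles, in the spirit of Stokolos's classical argument and subsequent refinements: for each large $N$, translate suitable thin ``sliver'' subsets of the offending rectangles $R_{k_1},\dots,R_{k_N}$ so as to obtain a measurable set $E_N\subseteq\R^2$ of controlled measure on which the averages $T^*\chi_{E_N}$ exceed $\tfrac12$ on a set of measure at least $c_N|E_N|$ with $c_N\to\infty$. The resulting unbounded blow-up defeats weak type $(1,1)$, and hence, via Sawyer-Stein again, the $L^1$-goodness of $\bT$, contradicting $(\text{i})$. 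The delicate step is the Perron-tree construction itself, where one must quantitatively exploit the axis-parallel rigidity and the combinatorial structure of the antichain configurations to control overlaps between translates and produce a ``shadow'' whose measure exceeds $|E_N|$ by an arbitrarily large factor; this is where Stokolos's original axis-parallel-specific argument in \cite{STOKOLOS88} enters in full force.
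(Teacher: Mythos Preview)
Your argument is correct and mirrors the paper's proof: the equivalence $(\text{ii})\Leftrightarrow(\text{iii})$ via Lemma~\ref{lem.linear} and the implication $(\text{iii})\Rightarrow(\text{i})$ via the weak $(1,1)$ bound plus Sawyer--Stein are handled identically, and for the substantive direction $(\text{i})\Rightarrow(\text{iii})$ both you and the paper ultimately defer to Stokolos~\cite{STOKOLOS88}. The only difference is cosmetic: the paper invokes the specific reformulation (explained in Moonens--Rosenblatt~\cite{MR}) that $L^1$-goodness forces the sequence of \emph{dyadic enlargements} $(R_k^*)$ to split into finitely many subsequences totally ordered by inclusion, from which the almost-nested decomposition of $(R_k)$ follows at once, whereas you sketch the contrapositive Perron-tree construction and then cite Stokolos for its execution.
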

\begin{proof}
That (ii) and (iii) are equivalent is an immediate consequence of Lemma~\ref{lem.linear}. Now that (ii) implies (i) follows easily from Theorem~\ref{thm.cond-suff} and from Sawyer-Stein's principle. Finally, if $(R_k)$ is $L^1$-good, then it follows from a result by A.~Stokolos \cite{STOKOLOS88} (not formulated in this exact way, though: see Moonens and Rosenblatt \cite{MR} where it is explained how Stokolos' 1988 theorem can be rephrased in this fashion~---~or see the more general result in A.~Stokolos' survey \cite[Corollary~1, p.~1448]{STOKOLOS2005}), that the sequence $(R_k^*)$ of all dyadic enlargements of rectangles in $(R_k)$, can be decomposed into finitely many subsequences that are totally ordered by inclusion, from which it results that $(R_k)$ can be decomposed into finitely many almost nested subsequences.
\end{proof}

Let us now state some weak type inequality one can obtain in $L^p$ with our rectangular differentiation processes, using correct factors.


\section{A ``corrected'' weak-type inequality in $L^{p}$}\label{sec.Lp}

As before, we fix a sequence $\bR=(R_k)$ of rectangles in the plane centered at the origin, the diameters of which tend to zero, and we associate to it the differentiation process $\bT:=\bT_{\bR}$ as above.

Recall that one defines the correct factors $Q_k$, $k\in\N$ associated to $\bR$ as in \eqref{eq.corr-fact}, p.~\pageref{eq.corr-fact}, and than one defines, for $1\leq p<\infty$, the $L^p$ correct factors $W_{k,p}$, $k\in\N$ by letting $W_{k,1}:=Q_k$ for $k\in\N$, and, in case one has $1<p<\infty$, by letting for $k\in\N$:
\[W_{k,p} : = {\left (Q_{k} \right)}^{\frac{1}{p}} {\vert R_{k} \vert}^{\frac{1}{p'}},\]
where $1<p'<\infty$ is the conjugate exponent to $p$ satisfying the equality $\frac 1p + \frac{1}{p'}=1$.

Assuming that one has $Q_k<+\infty$ for all $k\in\N$ and given $1\leq p<\infty$, we also define a corrected maximal operator $T^*_p$ by letting, for a locally integrable function $f$ on $\R^2$ and $x \in {\mathbb R}^{2}$:
\[T^*_p f(x):= \sup_{k} \left|\frac{1}{W_{k,p}} \int_{x + R_{k}}  f\right|.\]

The following theorem is a weak type $(p,p)$ inequality for the corrected maximal operator $T^*_p$; it is an adaptation to the continuous case of the discrete weak $\ell^p$ corrected inequality suggested by Rosenblatt and Wierdl in \cite[Comments and problems, p.~551]{RW}.
\begin{Theorem} \label{RWp}
Assume that one has $Q_k<+\infty$ for all $k\in\N$. Then  for 
all $f \in L^{p}({\mathbb R}^{2})$ and all $\lambda >0$, one has 
\[\vert \{x \in {\mathbb R}^{2}: T^*_p f(x) > \lambda \}\vert \leq \frac{1}{\lambda^p} \|f\|_p^p.\]
\end{Theorem}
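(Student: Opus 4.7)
The plan is to reduce the weak-type $(p,p)$ inequality for $T^*_p$ to the weak-type $(1,1)$ inequality for the $L^1$-corrected maximal operator $\tilde{T}^*$ (Theorem~\ref{thm.RW}) by applying Hölder's inequality inside each average. The design of the weights $W_{k,p} = Q_k^{1/p}|R_k|^{1/p'}$ is precisely tailored so that Hölder will absorb the factor $|R_k|^{1/p'}$ and leave behind $Q_k^{1/p}$, which is exactly the right normalization to invoke the $L^1$ result applied to $|f|^p$.

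More concretely, fix $f \in L^p(\R^2)$ and $k \in \N$. For each $x \in \R^2$, Hölder's inequality gives
\[
\left|\int_{x+R_k} f\right| \leq \left(\int_{x+R_k}|f|^p\right)^{1/p} |R_k|^{1/p'}.
\]
Dividing by $W_{k,p}$ and taking the supremum over $k$, I would obtain
\[
T^*_p f(x) \leq \sup_{k \in \N} \left(\frac{1}{Q_k}\int_{x+R_k}|f|^p\right)^{1/p} = \left(\tilde{T}^*(|f|^p)(x)\right)^{1/p},
\]
since $|f|^p \geq 0$ so the absolute value in the definition of $\tilde{T}^*$ is trivial. Raising to the $p$th power yields the pointwise domination $\bigl(T^*_p f(x)\bigr)^p \leq \tilde{T}^*(|f|^p)(x)$.

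Consequently, $\{x : T^*_p f(x) > \lambda\} \subseteq \{x : \tilde{T}^*(|f|^p)(x) > \lambda^p\}$, and applying Theorem~\ref{thm.RW} to the nonnegative $L^1$ function $|f|^p$ at the level $\lambda^p$ produces
\[
|\{x : T^*_p f(x) > \lambda\}| \leq \frac{C}{\lambda^p}\,\bigl\||f|^p\bigr\|_1 = \frac{C}{\lambda^p}\|f\|_p^p,
\]
which is the desired inequality. The case $p=1$ is just Theorem~\ref{thm.RW} itself, since $W_{k,1} = Q_k$.

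I do not expect any serious obstacle: the entire argument is a one-line Hölder interpolation, and the statement essentially asks us to observe that the $L^p$-correct factor $W_{k,p}$ is the geometric mean (in the Hölder sense) of $Q_k$ and $|R_k|$ with the correct exponents. The only minor point worth flagging is the constant: the stated bound in Theorem~\ref{RWp} has prefactor $1$ rather than $C$, so strictly speaking the constant from Theorem~\ref{thm.RW} propagates into the conclusion; this is either a harmless abuse of notation or a reflection of the fact that the Rosenblatt--Wierdl argument in \cite{RW} gives constant $1$ in this normalization.
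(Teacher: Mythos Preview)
Your argument is correct and considerably shorter than the paper's own proof, but the two take genuinely different routes. The paper does \emph{not} reduce to Theorem~\ref{thm.RW}; instead it reproves the Vitali-type selection argument from scratch at level $p$: one truncates the supremum to $0\leq k\leq N$, runs a stopping-time selection of centers $x_i$ and indices $k_i$, and at each step applies H\"older \emph{inside} the integral to convert the condition $\int_{x_i+R_{k_i}}|f|>\lambda W_{k_i,p}$ into $\int_{x_i+R_{k_i}}|f|^p>\lambda^p Q_{k_i}=\lambda^p|B_i|$. Disjointness of the selected translates $x_i+R_{k_i}$ then gives $\sum_i|B_i|<\lambda^{-p}\|f\|_p^p$, and since $A\subseteq\bigcup_i B_i$ the claimed bound follows with constant exactly~$1$.

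Your route---pointwise domination $T_p^*f\leq(\tilde{T}^*(|f|^p))^{1/p}$ via H\"older, then a black-box appeal to Theorem~\ref{thm.RW}---is more conceptual and makes transparent why the weight $W_{k,p}=Q_k^{1/p}|R_k|^{1/p'}$ is the natural one. The only cost, which you correctly flag, is the constant: as stated, Theorem~\ref{thm.RW} carries an unspecified $C>0$, so your conclusion is $|\{T_p^*f>\lambda\}|\leq C\lambda^{-p}\|f\|_p^p$ rather than the sharp $C=1$ that the paper's direct argument yields. If one traces through the Rosenblatt--Wierdl proof and finds $C=1$ there, the two approaches coincide in outcome; otherwise the paper's hands-on argument buys the sharper constant at the price of repeating the covering machinery.
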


\begin{proof}

Fix $N  \in {\mathbb N}$. Let $$A_{N} := 
\left\{ x \in {\mathbb R}^{2}: \frac{1}{W_{k,p}} \int_{x + R_{k}} \vert f \vert  > \lambda \text{ for some } 0 \leq k \leq N\right\}.$$ It is clearly sufficient to show that one has $\vert A_{N} \vert \leq  \frac{1}{\lambda^p} \|f\|_p^p$. 
Let $$
E_{N} :=\left\{0\leq k\leq N: \frac{1}{W_{k,p}} \int_{x + R_{k}} \vert f \vert  > \lambda \text{ for some } x \in A_{N}\right\}.
$$ 
For the sake of notational simplicity, we write $A$ and $E$ instead of  $A_{N}$ and $E_N$ respectively.

We now construct two finite sequences of sets $\{A_{k} \}$ and $\{E_{k}\}$. 
Let $A_{0} = A$ and $E_{0} = E$. Let $k_{0} = \min E_{0}$ and choose $x_{0}  \in A_{0}$ such that one has:
\begin{equation}\label{eq.I}
\frac{1}{W_{k_{1},p}} \int_{x_{0} + R_{k_{0}}} \vert f \vert  > \lambda
\end{equation}
Define:
\[B_{0} := x_{0} + \bigcup_{l=k_0}^\infty (R_{k_0}-R_l),\]
so that in particular one has $|B_0|=Q_{k_0}$, and:

\begin{multline*}C_{0}:= \bigg\{x: x \in A_{0}, \frac{1}{W_{i,p}} \int_{x + R_{i}} \vert f \vert  > \lambda\\ \text{and } (x + R_{i}) \cap (x_{1} + R_{k_{1}}) \not= \emptyset \text{ for some } 0 \leq i \leq N\bigg\}.\end{multline*}

Observe that one has $C_0\subseteq B_0$, which follows from the fact that having $x \in C_{0}$ implies that one has $x \in A_{0}$ and $\left( \left( x - x_{0} \right)  + R_{i} \right)  \cap R_{k_{0}} \not= \emptyset$ for some $0 \leq i \leq N$.
Notice also that by the minimality of $k_{0}$, one has $i \geq k_{0}$.

Now by definition of $W_{k_{0}, p}$, one has
\[W_{k_{0}, p} = {\left (Q_{k_{0}} \right)}^{\frac{1}{p}} {\vert R_{k_{0}} \vert}^{\frac{1}{p'}}.\]
Hence, one can write \eqref{eq.I} as 
\[ \int_{x_{1} + R_{k_{0}}} \vert f \vert  > \lambda {\left (Q_{k_{0}} \right)}^{\frac{1}{p}} {\vert R_{k_{0}} \vert}^{\frac{1}{p'}}.\]
By applying H\"older inequality, one thus obtains:
\[{\vert R_{k_{0}} \vert}^{\frac{1}{p'}} {\left ( \int_{x_{1} + R_{k_{0}}} {\vert f \vert}^{p} \right)}^{\frac{1}{p}} \geq \int_{x_{1} + R_{k_{0}}} \vert f \vert  > \lambda {\left (Q_{k_{0}} \right)}^{\frac{1}{p}} {\vert R_{k_{0}} \vert}^{\frac{1}{p'}}, \]
so that
\[   \int_{x_{1} + R_{k_{0}}} {\vert f \vert}^{p}  > {\lambda}^{p} Q_{k_{0}} = {\lambda}^{p} \vert B_{0} \vert. \]

Define now $A_{1} : = A_{0} \setminus B_{0}$ and:
\[E_{1}:= \left\{0 \leq k \leq N: \frac{1}{W_{k,p}} \int_{x + R_{k}} \vert f \vert  > \lambda \text{ for some } x \in A_{1}\right\}.\]
If one has $\vert A_{1} \vert =0$, one stops the procedure. Otherwise, we define $k_{1} = \min E_{1}$ and pick $x_{1} \in A_{1}$. 
One then gets:
\begin{equation}\label{eq.II}
\int_{x_{2} + R_{k_{1}}} \vert f \vert  > \lambda W_{k_{1},p}.
\end{equation}
Define also: $$B_{1} : = x_{1} + \bigcup_{l=k_1}^\infty (R_{k_1}-R_l),$$so that in particular one has $|B_1|=Q_{k_1}$,  and:
\begin{multline*}
C_{1}: = \bigg\{x: x \in A_{1}, \frac{1}{W_{i,p}} \int_{x + R_{i}} \vert f \vert  > \lambda\\
 \text{and } (x + R_{i}) \cap (x_{1} + R_{k_{1}}) \not= \emptyset \text{ for some }i, 0 \leq i \leq N\bigg\}.
 \end{multline*}
Observe that 
\[C_{1} \subseteq B_{1}.\]
Notice that by the minimality of $k_{1}$, one has $i \geq k_{1}$.

By definition of the $L^p$ correct factor, \eqref{eq.II} now rewrites:
\[ \int_{x_{2} + R_{k_{1}}} \vert f \vert  > \lambda {\left (Q_{k_{1}} \right)}^{\frac{1}{p}} {\vert R_{k_{1}} \vert}^{\frac{1}{p'}}.\]
Applying H\"older inequality,   one obtains: 
\[{\vert R_{k_{1}} \vert}^{\frac{1}{p'}} {\left ( \int_{x_{2} + R_{k_{1}}} {\vert f \vert}^{p} \right)}^{\frac{1}{p}} \geq \int_{x_{2} + R_{k_{1}}} \vert f \vert  > \lambda {\left (Q_{k_{1}} \right)}^{\frac{1}{p}} {\vert R_{k_{1}} \vert}^{\frac{1}{p'}}, \]
and hence:

\[   \int_{x_{2} + R_{k_{1}}} {\vert f \vert}^{p}  > {\lambda}^{p} Q_{k_{1}} = {\lambda}^{p} \vert B_{1} \vert. \]
Notice again that, by the definition of $C_{0}$ and using the fact that one has $x_{2} \notin C_{0}$, one can write:
\[\left( x_{1} + R_{k_{0}} \right) \cap \left (x_{2} + R_{k_{1}} \right) = \emptyset.\]

Now we continue this process defining for $i\geq 1$, sets $A_{i} : = A_{i-1} \setminus B_{i-1}$ and:
and 
\[E_{i} :=  \left \{0 \leq k \leq N: \frac{1}{W_{k,p}} \int_{x + R_{k}} \vert f \vert  > \lambda \text{ for some } x \in A_{i}\right\},\]
as long as one has $|A_{i}|>0$.

We claim that after finitely many steps, say $M$ steps, this procedure comes to an end. In other words we assert that there is a $M\in\N$ so that, up to a null set,
\[A \subseteq \bigcup_{i=1}^{M} B_{i}\ ;\]
moreover such an $M$ satisfies the following estimate:
\begin{equation}\label{eq.III}
M\leq {\left (\frac{{\Vert f \Vert}_{p}}{{\alpha}^{\frac{1}{p}} \lambda}\right)}^{p},
\end{equation}
where $\alpha$ is defined by $\alpha := \min_{0 \leq k \leq N} Q_{k}$.

To prove this, assume we are at step $M\in\N$ and that the construction has not yet come to an end. We then have, for every $1 \leq i \leq M$:  
\[   \int_{x_{i} + R_{k_{i}}} {\vert f \vert}^{p}  >  {\lambda}^{p} \vert B_{i} \vert  \geq {\lambda}^{p} \alpha,\]
and, for every $1 \leq i \leq M$ and $1 \leq j \leq M$, with $i \not=j$:
\[\left( x_{i} + R_{k_{i}} \right) \cap \left (x_{j} + R_{k_{j}} \right) = \emptyset.\]
Therefore, one obtains:
\[M\alpha {\lambda}^{p}   \leq  {\lambda}^{p}  \sum_{i=1}^{M}\vert B_{i} \vert   <   \sum_{i=1}^{M}  \int_{x_{i} + R_{k_{i}}} {\vert f \vert}^{p} =  \int_{\bigcup_{i=1}^{M} \left (x_{i} 
+ R_{k_{i}}\right)} {\vert f \vert}^{p} \leq \int_{{\mathbb R}^{2}} {\vert f \vert}^{p} =  {\Vert f \Vert}_{p}^{p}, \]
concluding the proof of \eqref{eq.III}.

Now let $M\in\N$ satisfy \eqref{eq.III} and, up to a negligible set:
\[A \subseteq \bigcup_{i=1}^{M} B_{i}.\]
Recalling that one has $|B_i|=Q_{k_i}$ for each $i$, we hence get:
\[\vert A \vert \leq \sum_{i=1}^{M}\vert B_{i} \vert \leq M\max_{1\leq i\leq M}Q_{k_i}\leq  {\frac{1}{\lambda}{\Vert f \Vert}_{p}^p},\]
and the proof is complete. 
\end{proof}
\begin{Remark}
{It is clear, looking at the proof of the above theorem, that at no place it is important to work with sequences of \emph{rectangles} in the \emph{plane}: $\bR$ could be replaced, in its statement and proof, by any sequence of \emph{bounded Lebesgue-measurable subsets of $\R^d$ having strictly positive Lebesgue measure} and finite correct factors (the definition of which is an immediate generalization of that with rectangles).}
\end{Remark}

\begin{Remark}\label{rmq.equiv}
Clearly, the following two properties are equivalent, by definition of $W_{k,p}$:
\begin{itemize}
\item[(i)] $(Q_k)$ has linear growth (\emph{i.e.} there exists $C>0$ such that one has $Q_k \leq C |R_k|$ for each $k\in\N$);
\item[(ii)] $(W_{k,p})$ has linear growth (\emph{i.e.} there exists $C>0$ such that one has $|W_{k,p}|\leq C |R_k|$ for each $k\in\N$).
\end{itemize}
If one of the two equivalent conditions above is satisfied, then it follows from Theorem \ref{thm.cond-suff} that the associated process $\bT_{\bR}$ is $L^1$-good which, by Lemma \ref{lem.linear}, is equivalent to $\bR$ being almost nested. 
\end{Remark}

The following is an example of a sequence $\bR$ for which $\bT_{\bR}$ is $L^p$-good for all $1<p<\infty$, \emph{without} its correct factors $W_{k,p}$, $k\in\N$ having linear growth. 


\begin{Example}\label{ex.lpgood}
Let $(\lambda_k)$ be a sequence of positive real numbers tending to $0$.


For any $k\in\N$ and $0\leq i\leq k$, let:
$$
R^k_i:=[-2^{-i-1}\lambda_k,2^{-i-1}\lambda_k]\times [-2^{i-k-1}\lambda_k,2^{i-k-1}\lambda_k],
$$
and observe that $\calR_k:=\{R^k_i:0\leq i\leq k\}$ is a collection of $k+1$ rectangles parallel to the coordinate axes, centered at the origin, having the same area $2^{-k}\lambda_k^2$.

Now define a sequence of integers $(n_k)$ by $n_0:=0$ and by requiring that one has $n_k=n_{k-1}+k+1$ for any $k\in\N^*$. One then defines a sequence $\bR:=(R_n)$ by letting, for $k\in\N$ and $n_k\leq n\leq n_k+k$:
$$
R_n:=R^k_{n-n_k}.
$$
This is just ordering the rectangles in $\bigcup_{k\in\N}\calR_k$ by enumerating first rectangles in $\calR_0$, then those in $\calR_1$ \emph{etc.}, and doing so in a way that in each $\calR_k$, $k\in\N$, rectangles are enumerated in decreasing order of their horizontal side-lengths.

Assume now that $\bR$ is decomposed into finitely many (say $N$) subsequences. At least one of them (call it $(R_{m_l})_{l\in\N}$) has the property that for infinitely many indices $k_0<k_1<\cdots<k_j<\cdots$ ($j\in\N$), the set $$
L_j=\{l\in\N : n_{k_j}\leq m_l\leq n_{k_j}+k_j\}
$$
contains at least $k_j/N$ elements.

Given $j\in\N$, write $\min L_j=m_{l_j}$ for $l_j\in\N$ and define $R_j^-:=\min L_j-n_{k_j}$ and $R_j^+:=\max L_j-n_{k_j}$; we know that $R_j^+-R_j^-\geq \#L_j\geq k_j/N$. Yet we compute, for $j\in\N$ (denoting by $Q_{l_j}$ the $l_j^{\text{th}}$ correct factor of the sequence $(R_{m_l})_{l\in\N}$):
\begin{multline*}
Q_{l_j}\geq |R_{n_{k_j}+R_j^-}-R_{n_{k_j}+R_j^+}|=|R^{k_j}_{R_j^-}-R^{k_j}_{R_j^+}|\\
=(2^{-R_j^-}+2^{-R_j^+})(2^{R_j^-}+2^{R_j^+}) 2^{-k_j}\lambda_{k_j}^2\geq 2^{k_j/N} |R_{l_j}|.
\end{multline*}
Hence $(Q_l)$ does \emph{not} have linear growth.

In summary, we can say that \emph{there is no way to decompose $\bR$ into finitely many subsequences, the correct factors of which have linear growth}.
\end{Example}

\begin{Remark}
Example~\ref{ex.lpgood} shows that a sequence $\bR$ can be $L^p$-good for all $1<p<\infty$, without its correct factors $W_{k,p}$, $k\in\N$ having linear growth (the sequence in Example~\ref{ex.lpgood} has all its terms parallel to the coordinate axes).
On the other hand, this is reasonable since in fact the linear growth on $(W_{k,p})_{k\in\N}$ is a condition actually independent of $p$ (it just rephrases the linear growth condition on $(Q_k)$, as was observed in Remark~\ref{rmq.equiv}).
\end{Remark}

Let us now start to develop the tools that will lead us to prove Theorem~\ref{THM2}; among them, the first objects we shall need are (parts of) generalized Perron trees.

\section{Dyadic blocks in generalized Perron trees}\label{sec.Linfini1}

In order to prove Theorem \ref{THM2}, we shall need to exploit, in quite a subtle fashion, (blocks of) generalized Perron trees introduced by Hare and Rönning \cite{HR}.
We here recall their construction and properties which will be crucial to our purposes; for the sake of clarity, we first detail the initial simple geometric construction Perron trees are based upon, and then their recursive construction.

\subsection*{The initial construction}\label{par.in-con}
A basic observation from Hare and Rönning \cite[pp.~217-8]{HR} can be formulated as follows: let $\Delta_1$ and $\Delta_2$ be two adjacent triangles lying from left to right on the horizontal axis, denote their respective side-lengths along the horizontal axis by $x$ and $y$, and their common vertical height-length by $h$ (see Figure~\ref{T1T2}).
\begin{figure}[h]
\begin{center}
\includegraphics[scale=0.9]{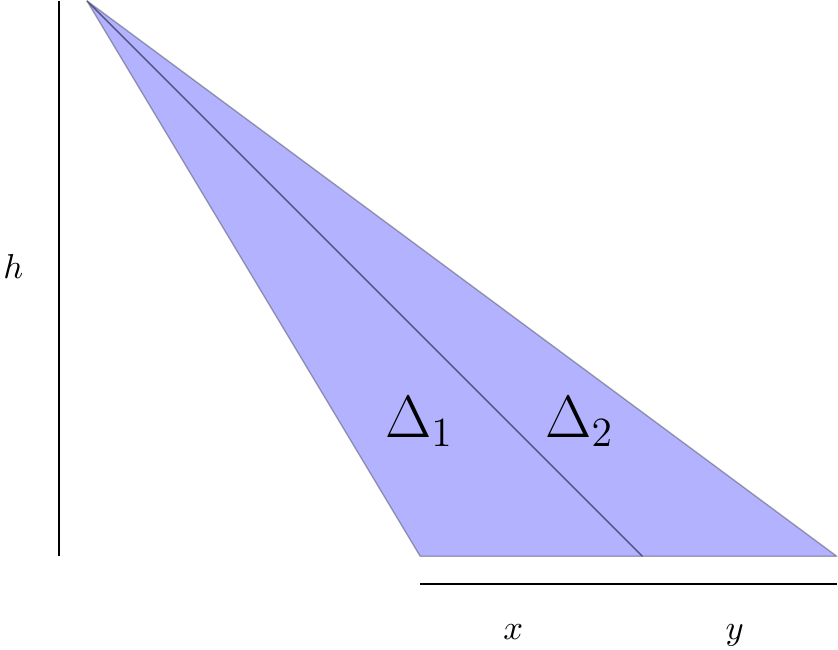}\caption{The triangles $\Delta_1$ and $\Delta_2$}\label{T1T2}
\end{center}
\end{figure}

\begin{figure}[h]
\begin{center}
\includegraphics[scale=0.9]{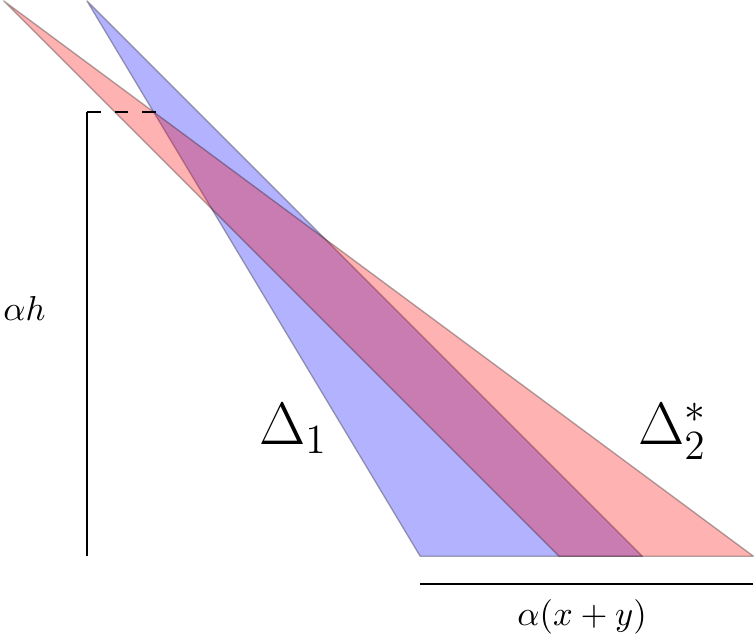}\caption{The triangles $\Delta_1$ and $\Delta_2^*$}\label{T1T2*}
\end{center}
\end{figure}

Let $0<\alpha<1$ be a real number. Denote by $\Delta_2^*$ the triangle obtained by translating horizontally $\Delta_2$ to the left in such a way that its ``right'' side meets the ``left'' side of $\Delta_1$ at a point of height $\alpha h$ (see Figure~\ref{T1T2*}). It is easy to see that $\Delta_1\cup \Delta_2^*$ is the union of a triangle $\Delta^\alpha_0 (\Delta_1,\Delta_2)$, which is the image of the triangle $\Delta_1\cup \Delta_2$ by a similarity of ratio $\alpha$, and of two ``excess triangles'' $\Delta_i^\alpha(\Delta_1,\Delta_2)$, $i=1,2$ whose areas can be easily computed (see Figure~\ref{SDelta}).

\begin{figure}[h]
\begin{center}
\includegraphics[scale=0.9]{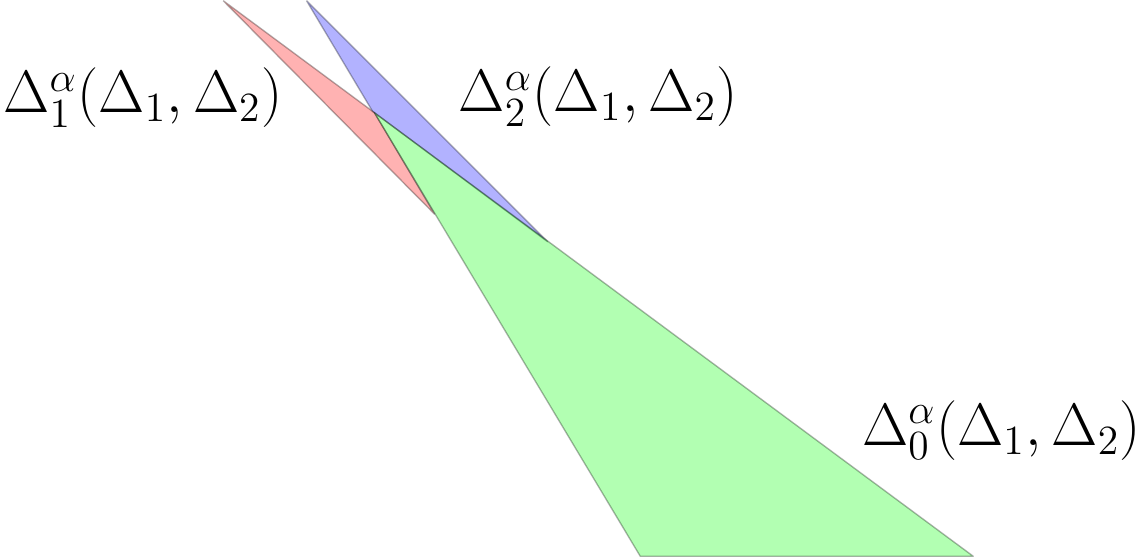}\caption{The triangles $\Delta^\alpha_i(\Delta_1,\Delta_2)$, $i=0,1,2$}\label{SDelta}
\end{center}
\end{figure}

Hence one obtains the following estimate on the area of $\Delta_1\cup \Delta_2^*$:
$$
|\Delta_1\cup \Delta_2^*|=\left[ \alpha^2+(1-\alpha)^2\left(\frac xy + \frac yx \right)\right] |\Delta_1\cup \Delta_2|.
$$

\subsection*{Construction of Perron trees}\label{par.con-pt}
Fix $\bb:=(b_k)_{k\in\N}$ an increasing sequence of nonnegative real numbers with $b_0=0$. Denote, for $k\in\N$, by $\Delta_k$ the triangle in the plane having vertices $A:=(0,1)$, $B_{k-1}:=(b_{k-1}, 0)$ and $B_k:=(b_k,0)$. 

Fix now $n\in\N$ and take a ``dyadic block'' of such triangles $\Delta_{2^n}, \Delta_{2^{n}+1},\dots, \Delta_{2^{n+1}-1}$. The idea is to apply the initial construction described in the previous section to each pair of triangles of the form $(\Delta_{2^n+2j},\Delta_{2^n+{2j+1}})$, $0\leq j\leq 2^{n-1}-1$. Denoting by $\tau_0$ the identity map and by $\tau_j$, $1\leq j\leq 2^{n-1}-1$, translations along the horizontal axis such that the triangles $\Xi_j:=\tau_j \Delta^\alpha_0(\Delta_{2^n+2j},\Delta_{2^n+{2j+1}})$, $0\leq j\leq 2^{n-1}-1$ form a sequence of successively adjacent triangles and defining, for $0\leq j\leq 2^{n-1}-1$, $\Delta_{2^n+2j}':=\tau_j \Delta_{2^n+2j}$ and $\Delta_{2^n+2j+1}':=\tau_j \Delta_{2^n+2j+1}^*$,we get:
$$
\left|\bigcup_{k=2^n}^{2^{n+1}-1} \Delta_k'\right|\leq \left[ \alpha^2+G_n(1-\alpha)^2\right]\left|\bigcup_{k=2^n}^{2^{n+1}-1} \Delta_k\right|,
$$
where one defined:
$$
G_n:=\sup_{0\leq j\leq 2^{n-1}-1} \left(\frac{x_j}{y_j}+\frac{y_j}{x_j}\right),
$$
using the notations $x_j$ and $y_j$ for $b_{2^{n}+2j+1}-b_{2^n+2j}$ and $b_{2^n+2j+2}-b_{2^n+2j+1}$ respectively.

The idea now is to re-start the initial construction on pairs $(\Xi_j,\Xi_{j+1})$, $0\leq j\leq 2^{n-1}-1$ in order to translate horizontally all initial triangles $\Delta_{2^n},\dots, \Delta_{2^{n+1}-1}$ onto new positions $\Delta_{2^n}'',\dots, \Delta_{2^{n+1}-1}''$ in such a way that the area of $\bigcup_{k=2^n}^{2^{n+1}-1} \Delta_k''$ is an (even smaller) proportion of that of $\bigcup_{k=2^n}^{2^{n+1}-1} \Delta_k$, and to repeat this procedure $n$ times in total. This, of course, requires to ensure one has a uniform control on the quantities $\frac xy + \frac yx$ that appear along the procedure.

The following statement gives a condition on the sequence $\bb$ in such a way that the above procedure can be conducted so as to provide translations sending triangles of the $n^{\text{th}}$ dyadic block onto new ones in a way that reduces the area of their union by a factor tending to zero when $n$ tends to infinity. Its proof will be easily supplied by adapting arguments found in Hare and Rönning \cite{HR}.
\begin{Lemma}[Dyadic blocks in generalized Perron trees]\label{lem.pt}
Assume that the sequence $\bb:=(b_k)$ is as above and satisfies:
\begin{equation}\label{eq.cond-bk}
G_{\bb}:=\sup_{n\in\N}\sup_{1\leq l\leq n} \left(\frac{b_{n+2l}-b_{n+l}}{b_{n+l}-b_n}+\frac{b_{n+l}-b_n}{b_{n+2l}-b_{n+l}}\right)<+\infty.
\end{equation}
Denote, for $k\in\N^*$, by $\Delta_k$ the triangle in the plane having vertices $A:=(0,1)$, $B_{k-1}:=(b_{k-1}, 0)$ and $B_k:=(b_k,0)$.

Under those assumptions, there exists a sequence $(\epsilon_n)$ of positive real numbers tending to $0$ and horizontal translations $\tau_k$, $k\in\N$ in such a way that, for any $n\in\N$, one has:
$$
\left| \bigcup_{k=2^n}^{2^{n+1}-1} \tau_k \Delta_k\right|\leq \epsilon_n \left| \bigcup_{k=2^n}^{2^{n+1}-1} \Delta_k\right|.
$$
\end{Lemma}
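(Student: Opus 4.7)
The plan is to iterate the initial construction described just before the lemma $n$ times on the $n$-th dyadic block, following the classical Perron tree strategy revisited by Hare and Rönning~\cite{HR}; a parameter $\alpha = \alpha_n \in (0,1)$ will be chosen at the end to optimise the final estimate.

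By induction on $j \in \{0,\ldots,n\}$, I would construct a configuration of $2^{n-j}$ adjacent ``composite'' triangles on the horizontal axis, each similar (scaled by $\alpha^j$) to a union $\bigcup_{k=a}^{a+2^j-1}\Delta_k$ of $2^j$ consecutive original triangles from the block. The inductive step from $j$ to $j+1$ pairs consecutive composite triangles and applies the initial construction with parameter $\alpha$ to each pair, replacing it by a central similar triangle (scaled by $\alpha$) together with two excess triangles; the central triangles, translated into adjacency, form the stage $j+1$ configuration, while the excess triangles are carried along by subsequent translations but are no longer subjected to the sprouting operation. Tracking the net horizontal displacement applied to each original $\Delta_k$ then produces the translation $\tau_k$ required by the statement.

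The main quantitative step is to show that the ratio $x/y + y/x$ of the base-lengths of each paired pair of composite triangles at stage $j$ is uniformly bounded by $G_{\bb}$. Indeed, since the stage-$j$ composites are (uniformly) $\alpha^j$-scaled copies of the corresponding sub-unions, the $\alpha^j$-factor cancels and for the pair indexed by $i$ this ratio equals
\[
\frac{b_{m+l}-b_m}{b_{m+2l}-b_{m+l}} + \frac{b_{m+2l}-b_{m+l}}{b_{m+l}-b_m}
\]
with $m = 2^n + 2i\cdot 2^j$ and $l = 2^j$; since $0\leq j\leq n-1$ forces $l \leq 2^{n-1} \leq 2^n \leq m$, the pair $(m,l)$ lies within the supremum defining $G_{\bb}$, whence the bound. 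Writing $C_j$ for the total area of the central composite triangles at stage $j$, the area identity of the initial construction yields $C_{j+1} = \alpha^2 C_j$ together with an excess of total area at most $(1-\alpha)^2 G_{\bb}\, C_j$ added at stage $j$. Summing the resulting geometric series produces
\[
\Bigl|\bigcup_{k=2^n}^{2^{n+1}-1}\tau_k\Delta_k\Bigr| \leq \Bigl(\alpha^{2n} + \frac{(1-\alpha)G_{\bb}}{1+\alpha}\Bigr)\Bigl|\bigcup_{k=2^n}^{2^{n+1}-1}\Delta_k\Bigr|.
\]

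Choosing $\alpha_n = 1 - n^{-1/2}$ for $n$ large then forces both $\alpha_n^{2n} \sim e^{-2\sqrt{n}}$ and $(1-\alpha_n)G_{\bb}/(1+\alpha_n) \sim G_{\bb}/(2\sqrt{n})$ to tend to zero, yielding the desired sequence $\epsilon_n \to 0$. The main obstacle is the inductive bookkeeping: one must verify that the stage-$j$ composite triangles genuinely form a configuration of pairwise similar copies of the appropriate sub-unions of the block, scaled uniformly by $\alpha^j$, and that the successive translations compose coherently into one single translation $\tau_k$ per original triangle. Once this structural invariant is in place, the ratio bound together with the area identity of the initial construction drive the estimate mechanically.
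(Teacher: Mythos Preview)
Your proposal is correct and follows precisely the approach sketched in the paper: the paper itself does not give a detailed proof of this lemma, but describes the initial construction and the recursive scheme, then states that the proof ``will be easily supplied by adapting arguments found in Hare and R\"onning~\cite{HR}''. Your argument supplies exactly those details---the $n$-fold iteration on the dyadic block, the verification that the base-ratio at each stage is controlled by $G_{\bb}$, the geometric-series estimate, and the optimisation over $\alpha_n$---and is a faithful execution of the intended strategy.
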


A second important property we will need in order to prove Theorem~\ref{THM2} can be seen as a simple geometric estimate concerning intersections of a triangle with translates of a rectangle containing it.

\section{A simple geometric observation}\label{sec.simple-geom}

In this section we fix real numbers $0\leq b<c$ and define points $A:=(0,1)$, $B:=(b,0)$ and $C:=(c,0)$ and let $\Delta$ denote the (full) triangle $ABC$. Let $\alpha>0$ be such that one has $AB\geq \alpha BC$.

Denote then by $B'$ and $C'$ the points defined by $\overrightarrow{AB'}=\frac 32 \overrightarrow{AB}$ and $\overrightarrow{AC'}=\frac 32 \overrightarrow{AC}$ and let $\tilde{P}$ be the smallest rectangle having $AB'$ as one of its sides, and containing  the triangle $AB'C'$ (see figure~\ref{fig.Rtilde}).
\begin{figure}[h]
\begin{center}
\includegraphics[scale=1]{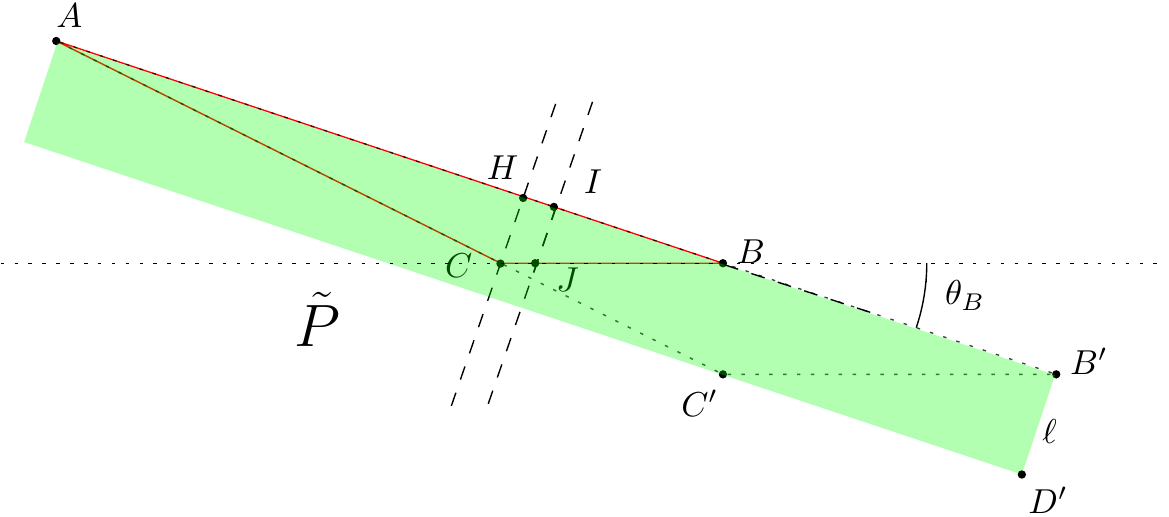}\caption{The triangles $ABC$, $AB'C'$ and the rectangle $\tilde{P}$}\label{fig.Rtilde}
\end{center}
\end{figure}
Finally define $P$ to be the rectangle obtained from $\tilde{P}$ by translating it in such a way that its center is the origin, and let $V$ denote the trapezium $BCC'B'$.

The following estimate, although simple, will be crucial in the sequel.
\begin{Lemma}
For any $x \in V$ one has $$\left|\left(x+P\right) \cap \Delta \right| \geq \frac{\min(\alpha,1)}{72}\left|P\right|.$$
\end{Lemma}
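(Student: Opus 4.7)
I will work in the coordinate system aligned with $\overrightarrow{AC}$, placing $A$ at the origin so that $C=(s,0)$ with $s=|AC|$ and $B=(B_u,h)$, where $h$ is the distance from $B$ to the line $AC$ and $B_u\in[0,s]$. In these coordinates $P=[-3s/4,3s/4]\times[-3h/4,3h/4]$ and $V$ has vertices $B$, $C$, $C'=(3s/2,0)$ and $B'=(3B_u/2,3h/2)$; in particular $|P|=9sh/4$ and $|\Delta|=sh/2$, so the claimed inequality is equivalent to $|(x+P)\cap\Delta|\geq \min(\alpha,1)\,sh/32$.

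The key step is a simultaneous rescaling of $x$ and $P$. For $x\in V$, set $y:=\tfrac{2}{3} x$ and $P':=[-s/4,s/4]\times[-h/4,h/4]$. Two elementary coordinate checks, using the bounds $x_u\leq 3s/2$ and $x_v\leq 3h/2$ valid throughout $V$, yield the containment $y+P'\subseteq x+P$: indeed, for any $(p,q)\in P'$ one has $|p-x_u/3|\leq s/4+s/2=3s/4$, and similarly in the $v$-coordinate. At the same time $y\in\Delta$, because the set $\tfrac{2}{3}V$ is the convex quadrilateral with vertices $\tfrac{2}{3}B,\,\tfrac{2}{3}C,\,C,\,B$, each of which lies on $\partial\Delta$. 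This reduces the claim to a uniform lower bound for $|(y+P')\cap\Delta|$ when $y$ ranges over $\tfrac{2}{3}V$.

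To estimate this remaining quantity, I would slice along the $v$-direction: at height $v$ the cross-section of $\Delta$ has length $s(h-v)/h$, while that of $y+P'$ has length $s/2$. A case analysis on the position of $y$ in $\tfrac{2}{3}V$ then gives a closed form for the polygonal intersection area. The extremal corner is $y=B$ (equivalently $x=B'$), where $(y+P')\cap\Delta$ is exactly the top similar quarter of $\Delta$, a triangle of linear ratio $1/4$ and area $|\Delta|/16=sh/32$, which saturates the bound already when $\alpha\geq 1$. At the opposite corner $y=C$ the intersection is a trapezoidal region of area $h(s+B_u)/32\geq sh\min(\alpha,1)/32$, the last inequality being trivial since $B_u\geq 0$ and $\min(\alpha,1)\leq 1$. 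The remaining vertices $\tfrac{2}{3}B$ and $\tfrac{2}{3}C$ of $\tfrac{2}{3}V$ produce strictly larger intersections by direct computation.

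The main obstacle is making the case analysis uniform, since the combinatorial type of the polygon $(y+P')\cap\Delta$ changes as $y$ varies in $\tfrac{2}{3}V$. I would address this either by a log-concavity argument (the map $y\mapsto |(y+P')\cap\Delta|$ is log-concave as a convolution of indicator functions of convex sets, so its minimum on the convex region $\tfrac{2}{3}V$ is attained on the boundary) combined with direct estimates along the four edges, or by explicitly decomposing $\tfrac{2}{3}V$ into finitely many combinatorial cells and estimating the area on each. The hypothesis $|AB|\geq\alpha|BC|$ enters precisely where $\Delta$ becomes narrow relative to $P'$ near the base edge of $\tfrac{2}{3}V$, accounting for the factor $\min(\alpha,1)$ in the bound.
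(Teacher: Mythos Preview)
There is a setup error that propagates through the whole argument: in the lemma's context $\tilde{P}$ is defined as the smallest rectangle \emph{having $AB'$ as one of its sides} and containing $AB'C'$, so $P$ is oriented along $AB$, not along $AC$. In your $AC$-aligned coordinates $P$ is therefore not the axis-parallel box $[-3s/4,3s/4]\times[-3h/4,3h/4]$; rather, one side of $P$ has length $\tfrac{3}{2}|AB|$ and lies parallel to $AB$, while the perpendicular side has length $\ell$ equal to the distance from $C'$ to the line $AB$. With $P$ tilted in your frame, the coordinate inequalities you invoke for the containment $y+P'\subseteq x+P$ no longer hold as written, and the vertex computations that follow concern the wrong rectangle. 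This is exactly why the factor $\min(\alpha,1)$ looks superfluous in your analysis: for the $AB$-aligned $P$ of the paper, the corner $x=B'$ genuinely forces an intersection of order $\alpha|P|$ when $\alpha$ is small (the rectangle $B'+P$ then meets $\Delta$ only in a thin wedge near $B$), whereas the $AC$-aligned rectangle you work with does not exhibit this degeneration.

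Orientation aside, your overall strategy is different from the paper's and worth recording. The paper evaluates $|(x+P)\cap\Delta|$ directly at the two far corners $x=B'$ and $x=C'$ of $V$ by elementary planar geometry (inscribing a triangle $BIJ'$ at $B'$, and identifying an exact similar sub-triangle of $\Delta$ at $C'$), and then asserts without further argument that these positions are extremal over $V$. Your idea of passing from $(x,P)$ to $(\tfrac{2}{3}x,\tfrac{1}{3}P)$ so that the centre lands inside $\Delta$, together with the log-concavity of $y\mapsto|(y+P')\cap\Delta|$ (a convolution of indicators of convex bodies) to reduce the infimum over a convex polygon to its vertices, is a cleaner and more systematic device; in fact the log-concavity observation would also justify rigorously the paper's ``it is clear'' step. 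If you redo the construction in $AB$-aligned coordinates (where $P$ is axis-parallel), the containment $y+P'\subseteq x+P$ must be reworked, but a variant of your approach should still go through.
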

\begin{proof}
Denote by $I$ the middle of the segment $AB'$, by $H$ the point on the segment $AB$ having the property that $CH$ is orthogonal to $AB$ and by $J$ the intersection of $BC$ with the line orthogonal to $AB$ and passing through $I$ (see figure~\ref{fig.Rtilde} again). Finally denote by $D'$ the ``lower right'' vertex of $\tilde{P}$ and let $\ell$ denote the length of the segment $B'D'$.

If $\theta_B\in (0,\frac{\pi}{2})$ stands for the (non-oriented) angle between the horizontal line and $AB$, one obviously has $\tan\theta_B=\frac{1}{b}$.
It is also plain to see, using the triangle $B'C'D'$, that one has: $$\ell=B'D'=B'C'\sin\theta_B=\frac{3}{2} BC\cdot\frac{1}{\sqrt{1+\cot^2\theta_B}}=\frac 32 \cdot\frac{BC}{\sqrt{1+b^2}}=\frac{3}{2}\cdot \frac{BC}{AB}.
$$
Using the triangle $BHC$ in a similar fashion, one gets $CH=BC\sin\theta_B$ and $BH=BC\cos\theta_B$. Since one easily gets $BI=\frac 14 AB$, there comes:
$$
IJ=CH\cdot \frac{BI}{BH}=\frac 14 AB\tan\theta_B=\frac{AB}{4b}.
$$
We hence finally compute:
$$
\frac{IJ}{\ell}=\frac{1}{6b}\cdot\frac{(AB)^2}{BC}=\frac{1}{6}\cdot \frac{AB}{b}\cdot \frac{AB}{BC}=\frac 16 \cdot \frac{\sqrt{1+b^2}}{b}\cdot \frac{AB}{BC}\geq \frac 16 \alpha,
$$
using the assumption we made that $AB\geq\alpha BC$.

The previous observations ensure that the intersection of the rectangle $B'+P$ with the triangle $\Delta=ABC$ contains the triangle formed by the points $B$, $I$ and $J':=I+\frac 16 \alpha\overrightarrow{B'D'}$; hence one has, using the obvious equality $BI=\frac 14 AB=\frac 16 AB'$:
$$
|(B'+P)\cap \Delta|\geq \frac 12 BI\cdot \frac{1}{6}\alpha\ell=\frac{1}{72}\alpha AB'\cdot \ell=\frac{1}{72}\alpha |P|.
$$

Denoting now $K$, $L$ and $M$ respectively the intersections of the ``upper'' side of $C'+P$ parallel to $AB'$ with the lines $AC$, $BC$ and $B'C'$ respectively, one computes:
$$
BL=B'M=\frac 12 B'C'=\frac 34 BC.
$$

It hence follows that $CL=\frac 14 BC$ and that the intersection of $\Delta=ABC$ with $C'+P$, which is the triangle $CKL$, is a triangle homothetic to $ABC$ with ratio $\frac 14$, and thus similar to $AB'C'$ with ratio $\frac 16$, yielding in turn:
$$
|(C'+P)\cap \Delta|=\frac{1}{36} |AB'C'|=\frac{1}{72}|P|.
$$

Now denote by $V$ the trapezion $BCC'B$; it is clear that one has:
\begin{equation}\label{eq.inf-x-V}
\inf_{x\in V} |(x+P)\cap \Delta|\geq \min\{|(B'+P)\cap \Delta|,|(C'+P)\cap \Delta|\}\geq \frac{1}{72}\min(\alpha,1)|P|.
\end{equation}
This completes the proof of the lemma.

\end{proof}

We are now ready to prove Theorem~\ref{THM2}.

\section{Proof of Theorem \ref{THM2}}\label{sec.Linfini2}

Fix, as before, a nondecreasing sequence $\bb=(b_k)$ of positive real numbers satisfying $b_0=0$ and define points $B_k:=(b_k,0)$ for all $k\in\N$ and let $A:=(0,1)$; for each $k\in\N^*$ we then denote by $\Delta_k$ the triangle $AB_{k-1}B_{k}$. For each $k\in\N$, we now associate to $\Delta_k$ a rectangle $P_k$ centered at the origin according to the procedure described in the previous section~\ref{sec.simple-geom}.

Fix also a sequence $\boldsymbol{\delta}=(\delta_k)_{k\in\N^*}$ of positive real numbers and consider the sequence of rectangles $\bR=\bR(\bb,\bdelta)=(R_k)_{k\in\N^*}$ defined by $R_k:=\delta_n P_k$ if $k\in\N^*$ satisfies $2^n\leq k<2^{n+1}$ for a given $n\in\N$; remember that we called $\bdelta$ an \emph{admissible sequence} in case one has $\diam R_k\to 0$, $k\to\infty$.

Let us now (re-)state and prove the main theorem of our paper, referred to before as Theorem~\ref{THM2}, which we recall now.
\begin{Theorem}\label{THM2-v2}
Assume that the sequence $\boldsymbol{b}$ satisfies the following two conditions:
\begin{enumerate}
\item[(i)] there exists a constant $c>0$ such that one has $1 + b_{k-1}^2 \geq c(b_{k} -b_{k-1})^2$ for all $k\in\N^*$;
\item[(ii)] one has: $$G_{ \boldsymbol{b} }:=\sup_{n\in\N}\sup_{1\leq l\leq n} \left(\frac{b_{n+2l}-b_{n+l}}{b_{n+l}-b_n}+\frac{b_{n+l}-b_n}{b_{n+2l}-b_{n+l}}\right)<+\infty.$$
\end{enumerate}
Under those assumptions, the process $\bT_{\bR}$ associated to $\bR = \bR(\bb,\boldsymbol{\delta})$ is $L^\infty$-bad for any admissible sequence $\bdelta$.
\end{Theorem}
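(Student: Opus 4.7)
The strategy is to invoke the Hagelstein--Parissis characterization recalled around~\eqref{eq.linfini-good}: it suffices to produce some $\lambda\in (0,1)$ and a sequence of Borel sets $(B_n)$ for which $|\{T^*\chi_{B_n}>\lambda\}|/|B_n|\to +\infty$. For each integer $n$, one constructs $B_n$ from a generalized Perron tree attached to the $n$-th dyadic block $\{\Delta_{2^n},\ldots,\Delta_{2^{n+1}-1}\}$: since condition~(ii) guarantees $G_{\bb}<\infty$, Lemma~\ref{lem.pt} produces horizontal translations $(\tau_k)_{2^n\leq k<2^{n+1}}$ and a rate $\epsilon_n\downarrow 0$ for which $W_n := \bigcup_{k=2^n}^{2^{n+1}-1}\tau_k\Delta_k$ satisfies $|W_n|\leq \epsilon_n A_n$, with $A_n := \bigl|\bigcup_k\Delta_k\bigr|$. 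Setting $B_n:=\delta_n W_n$ then gives $|B_n|\leq \delta_n^2\epsilon_n A_n$; notice that $\delta_n$ will cancel out of the key ratio below, which is why the argument will be uniform in the admissible sequence $\bdelta$.

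To locate a large level set of $T^*\chi_{B_n}$, one combines the geometric estimate of Section~\ref{sec.simple-geom} with a scaling argument. Condition~(i) yields the uniform lower bound $\alpha_k:=\sqrt{1+b_{k-1}^2}/(b_k-b_{k-1})\geq \sqrt{c}$, so the lemma in Section~\ref{sec.simple-geom} applies with a single constant $c_0:=\min(\sqrt c,1)/72>0$: for every $k$ and every $y\in V_k$, $|(y+P_k)\cap \Delta_k|\geq c_0|P_k|$. After translating by $\tau_k$ and rescaling by $\delta_n$, one gets $T_k\chi_{B_n}(\delta_n y)\geq c_0$ for every $y\in \tau_kV_k$; hence, setting $S_n := \delta_n\bigcup_{k=2^n}^{2^{n+1}-1}\tau_k V_k$, one has $S_n\subseteq \{T^*\chi_{B_n}>c_0/2\}$, so that
\[
\frac{|\{T^*\chi_{B_n}>c_0/2\}|}{|B_n|}\geq \frac{|S_n|}{|B_n|}=\frac{\bigl|\bigcup_k\tau_kV_k\bigr|}{|W_n|}=:\rho_n.
\]

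All that remains, and this will be the main obstacle, is to prove $\rho_n\to +\infty$ as $n\to\infty$. The idea is to revisit the inductive construction of Lemma~\ref{lem.pt} while simultaneously tracking the fate of the translated trapezia $\tau_k V_k$. The key unifying observation is that both $\tau_k\Delta_k$ and $\tau_k V_k$ admit the common horizontal cross-section description $h_k(s):=[b_{k-1}s+t_k,\,b_ks+t_k]$ (above the $x$-axis for $s\in [0,1]$, and below for $s\in [1,3/2]$), so that $|W_n|$ and $|\bigcup_k\tau_kV_k|$ are, respectively, the integrals of $\phi(s):=|\bigcup_k h_k(s)|$ over $[0,1]$ and over $[1,3/2]$. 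A pair-wise analysis analogous to that of Section~\ref{par.in-con} then shows that, at each step of the Perron iteration, if the triangles compress by a factor $r_\Delta=\alpha^2+(1-\alpha)^2(x/y+y/x)$, then the trapezia compress only by a \emph{strictly larger} factor $r_V>r_\Delta$---essentially because the cross-sections $h_k(s)$ are longer, and hence overlap proportionally less, for $s>1$ than for $s<1$---provided $\alpha$ is chosen as the optimal Hare--R\"onning balance $\alpha=G/(1+G)$ (with $G\leq G_{\bb}$) that made the compression of the triangles possible in the first place. Compounding this over the $n$ pairing steps of the tree yields $\rho_n\gtrsim (r_V/r_\Delta)^n\to +\infty$, which concludes the proof. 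The hardest part is therefore the careful geometric verification, across all iterations, of the strict inequality $r_V>r_\Delta$ at the optimal~$\alpha$; the role of condition~(ii) is to guarantee a uniform such optimal $\alpha$, and that of condition~(i) is to make the constant $c_0$ in the geometric lemma uniform in $k$.
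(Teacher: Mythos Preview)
Your setup is exactly the paper's: you correctly use Lemma~\ref{lem.pt} to define the Perron block $W_n=K^n=\bigcup_k\tau_k\Delta_k$, rescale by $\delta_n$ to form $B_n$, invoke the lemma of Section~\ref{sec.simple-geom} (with the uniform $\alpha=\sqrt{c}$ coming from hypothesis~(i)) to get $S_n=\delta_n\bigcup_k\tau_k V_k\subseteq\{T^*\chi_{B_n}\geq c_0\}$, and reduce everything to showing $\rho_n:=|\bigcup_k\tau_kV_k|/|K^n|\to\infty$. Up to here your plan and the paper's proof coincide.

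The divergence is in how $\rho_n\to\infty$ is obtained. The paper does \emph{not} track compression ratios through the Perron iteration; it uses instead a one-line geometric observation (equation~\eqref{eq.19}): the union $V^n=\bigcup_k\tau_kV_k$ contains a copy of the original (untranslated) triangle $\Delta^{(n)}=\bigcup_k\Delta_k$ scaled by~$\tfrac13$, hence $|V^n|\geq\tfrac19|\Delta^{(n)}|$. Combined with $|K^n|\leq\epsilon_n|\Delta^{(n)}|$ from Lemma~\ref{lem.pt}, this gives $\rho_n\geq \tfrac{1}{9\epsilon_n}\to\infty$ immediately. No inductive bookkeeping is needed.

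Your proposed route has a genuine difficulty that you acknowledge but do not resolve. The Perron iteration is tailored to \emph{triangles}: at every stage the central piece $\Delta_0^\alpha$ is again a triangle similar to the input, so the compression factor $r_\Delta=\alpha^2+(1-\alpha)^2G$ applies uniformly at every level. The translated trapezia do not share this self-similarity: after one sliding step, a pair $\tau_kV_k\cup\tau_{k+1}V_{k+1}$ is no longer a trapezium, and there is no clean formula ``$r_V$'' governing its area at the next step. To make your compounding estimate $\rho_n\gtrsim(r_V/r_\Delta)^n$ rigorous you would need a quantity that (a) is well-defined at every level of the tree, (b) satisfies a recursive inequality, and (c) is bounded away from $r_\Delta$ uniformly in $n$ and in the stage; none of this is supplied. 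The cross-section heuristic you sketch (that $\phi(s)/\phi_0(s)$ is larger for $s>1$) is correct for a single pairing but does not obviously iterate, and in any case by itself only yields $\rho_n\gtrsim 1$, not divergence. So the ``hardest part'' you flag is precisely where the argument is incomplete, and the paper's direct lower bound on $|V^n|$ bypasses it entirely.
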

\begin{proof}
We keep the notations introduced just above and let $(\epsilon_n)_{n\in\N}$ and $(\tau_k)_{k\in\N^*}$ be associated to $\bb$ (and the corresponding sequence of triangles $\Delta_1$, $\Delta_2$,$\dots$) according to Lemma~\ref{lem.pt}. For $k\in\N^*$, denote also by $V_k$ the trapezium associated to $\Delta_k$ according to the procedure described in section~\ref{sec.simple-geom}. Finally define also, for $n\in\N$:
$$ K^n:=\bigcup_{k=2^n}^{2^{n+1}-1} \tau_k \Delta_k\quad\text{and}\quad V^n:= \bigcup_{k=2^n}^{2^{n+1}-1}\tau_k V_k. $$

Since for any $n\in\N$, $V^n$ contains a similar copy of the triangle $\bigcup_{k=2^n}^{2^{n+1}-1} \Delta_k$ with ratio $\frac 13$, it follows one one hand that one has:
\begin{equation}\label{eq.19}
|V^n| \geq\frac 19 |\bigcup_{k=2^n}^{2^{n+1}-1} \Delta_k|.
\end{equation}
On the other hand, we have, for each $k\in\N^*$, using hypothesis (i):
$$
AB_{k-1}=\sqrt{1+b_{k-1}^2}\geq \sqrt{c} (b_k-b_{k-1})=\sqrt{c}B_{k-1}B_k.
$$
Hence the estimates obtained in section~\ref{sec.simple-geom} hold for $\Delta_k$, $P_k$ and $V_k$ with $\alpha=\sqrt{c}$.

Given $n\in\N$, it now follows from the definition of $V^n$ and from the observations of the previous section that, for each $x\in V^n$, there exists $2^n\leq k\leq 2^{n+1}-1$ such that one has $x\in \tau_k V_k$ and hence also:
$$
|(x+P_k)\cap \tau_k \Delta_k|\geq \frac{\min(\alpha,1)}{72}|P_k|;
$$
and hence also, defining $t_0:= \frac{\min(\alpha,1)}{72}$:
$$
\frac{1}{|P_k|} |(x+P_k)\cap K^n|\geq \frac{1}{|P_k|} |(x+P_k)\cap \tau_k\Delta_k|\geq t_0.
$$
Fix now an admissible sequence $\bdelta$ and define $\bR=\bR(\bb,\bdelta)$ as above. For $n\in\N$ and $x\in\delta_n V^n$, there comes $\frac{x}{\delta_n}\in V^n$ so that the latter computations ensure the existence of an integer $2^n\leq k\leq 2^{n+1}-1$ for which one has:
$$
\frac{1}{|R_k|} |(x+R_k)\cap \delta_nK^n|\geq \frac{1}{|P_k|} \left|\left(\frac{x}{\delta_n}+P_k\right)\cap K^n\right|\geq t_0,
$$
which rewrites:
$$
\frac{1}{|R_k|} \int_{x+R_k} \chi_{\delta_n K^n}\geq t_0,
$$
and implies that one has $T^*(\chi_{\delta_n K^n})(x)\geq t_0$, where $\bT=\bT_{\bR}$ stands for the process associated to $\bR=(R_k)_{k\in\N^*}$.
One hence obtains the following inclusion for each $n\in\N$:
$$
\delta_n V^n\subseteq \left\{ T^*\chi_{\delta_n K^n}\geq t_0\right\}.
$$
Using \eqref{eq.19} and Lemma~\ref{lem.pt}, the above inclusion yields in particular, for each $n\in\N$:
$$
\frac{\left|\left\{ T^*\chi_{\delta_n K^n}\geq t_0\right\}\right|}{|\delta_n K^n|}\geq \frac{|\delta_n V^n|}{|\delta_n K^n|}= \frac{|V^n|}{|K^n|}\geq \frac 19 \frac{\left|\bigcup_{k=2^n}^{2^{n+1}-1} \Delta_k\right|}{\left|\bigcup_{k=2^n}^{2^{n+1}-1} \tau_k\Delta_k\right|}\geq \frac{1}{9\epsilon_n}\to\infty.
$$
But then (see equation \eqref{eq.linfini-good}, p.~\pageref{eq.linfini-good}), $\bT$ cannot be $L^\infty$-good, and the proof is complete.
\end{proof}
\begin{Remark}
In the conditions of the statement of Theorem~\ref{THM2-v2}, the process $\bT$, $L^\infty$-bad, is also $L^p$-bad for any $1\leq p<\infty$.
It hence follows from an observation we present in the Appendix (see Theorem~\ref{thm.appendix}) that, for any $1\leq p<\infty$, the set of functions $f\in L^p(\R^2)$ for which one has $\limsupe_{k\to\infty} |T_kf|<+\infty$ on a set of positive Lebesgue measure, is \emph{meager} (or a \emph{first category} subset) in $L^p(\R^2)$.
\end{Remark}

\appendix
\section{Good functions for our bad processes}

As we mentioned before, we will devote this appendix to show that there are few ``good'' functions in $L^p$ for $L^p$-bad processes. Since this fact holds in a much wider generality than the context of rectangular averaging processes we discussed, we formulate it here for convolution processes, since it seemed to us to be worthwile noticing.

More precisely, we prove the following result.
\begin{Theorem}\label{thm.appendix}
Assume that $(\varphi_k)\subseteq L^1_+(\R^d)$ is a collection of nonnegative integrable functions in $\R^d$ and that $1\leq p<\infty$ is a fixed real number.
Associate to $(\varphi_k)$ a process $\bT=(T_k)$ by letting, for a locally integrable $f$, $T_kf:=\varphi_k*f$. If $\bT$ is $L^p$-bad, then the set:
$$
\calG:=\left\{f\in L^p(\R^d):\limsupe_{k\to\infty} |T_kf|<+\infty\text{ on a set of positive Lebesgue measure}\right\}
$$
is meager in $L^p(\R^d)$.
\end{Theorem}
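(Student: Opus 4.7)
The plan is to show that $\calG$ is contained in a countable union of closed nowhere-dense subsets of $L^p(\R^d)$, hence that it is meager.

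First, since $\varphi_k \in L^1(\R^d)$, Young's inequality ensures that $T_k f = \varphi_k * f \in L^p(\R^d)$ for each $f \in L^p(\R^d)$, so $T_k f$ is finite a.e.; on the full-measure set where $T_k f(x) < +\infty$ for every $k \in \N$, the conditions $\limsup_k |T_k f(x)| < +\infty$ and $T^* f(x) := \sup_k |T_k f(x)| < +\infty$ coincide. Hence $\calG$ equals $\{f \in L^p(\R^d) : |\{T^* f < +\infty\}| > 0\}$. For any such $f$ the Lebesgue density theorem produces a density point of $\{T^* f < +\infty\}$, near which one finds $y \in \Q^d$ and a positive rational $r$ satisfying $|\{T^* f < +\infty\} \cap B(y,r)| > \tfrac{2}{3}|B(y,r)|$; writing $\{T^* f < +\infty\} = \bigcup_{M \in \N^*} \{T^* f \leq M\}$ and applying monotone convergence gives some $M \in \N^*$ with $|\{T^* f \leq M\} \cap B(y,r)| > \tfrac{2}{3}|B(y,r)|$. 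Defining
\[
F^y_{r,M} := \bigl\{ f \in L^p(\R^d) : \bigl|\{x \in B(y,r) : T^* f(x) \leq M\}\bigr| \geq \tfrac{2}{3}|B(y,r)| \bigr\},
\]
one thus obtains $\calG \subseteq \bigcup F^y_{r,M}$, a countable union indexed by $(y,r,M) \in \Q^d \times (\Q \cap (0,+\infty)) \times \N^*$.

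Each $F^y_{r,M}$ is closed in $L^p(\R^d)$: if $f_j \to f$ in $L^p$, then by Young's inequality $T_k f_j \to T_k f$ in $L^p$ for each fixed $k$, so a diagonal extraction produces a subsequence along which $T_k f_j \to T_k f$ pointwise a.e. for every $k$; this yields $T^* f \leq \liminf_j T^* f_j$ a.e., and reverse Fatou applied to the subsets $A_j := \{x \in B(y,r) : T^* f_j(x) \leq M\}$ of the finite-measure set $B(y,r)$ gives $|\limsup_j A_j| \geq \tfrac{2}{3}|B(y,r)|$ with $T^* f \leq M$ on $\limsup_j A_j$, so $f \in F^y_{r,M}$.

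The critical step consists in checking that each $F^y_{r,M}$ has empty interior. Because each $T_k$ is a convolution operator and hence commutes with translations, $f \mapsto \tau_y f$ (with $\tau_y f(x) := f(x-y)$) is an $L^p$-isometry exchanging $F^0_{r,M}$ and $F^y_{r,M}$, so one may take $y=0$. Suppose toward a contradiction that some $L^p$-ball $\{f : \|f - f_0\|_p < \rho\}$ is contained in $F^0_{r,M}$. For every $h \in L^p(\R^d)$ with $\|h\|_p < \rho$, both $f_0 + h$ and $f_0 - h$ lie in $F^0_{r,M}$, hence the sets $A^\pm_h := \{x \in B(0,r) : T^*(f_0 \pm h)(x) \leq M\}$ each have measure $\geq \tfrac{2}{3}|B(0,r)|$ in $B(0,r)$, which forces $|A^+_h \cap A^-_h| \geq \tfrac{1}{3}|B(0,r)| > 0$; on $A^+_h \cap A^-_h$ the parallelogram-type identity $2|T_k h| = |T_k(f_0+h) - T_k(f_0-h)|$, combined with the pointwise bounds on each summand, forces $T^* h \leq M$. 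Every $h \in L^p$ with $\|h\|_p < \rho$ therefore satisfies
\[
|\{x \in B(0,r) : T^* h(x) \leq M\}| \geq \tfrac{1}{3}|B(0,r)| > 0.
\]
But since $\bT$ is translation-invariant and $L^p$-bad, the Nikishin--Stein principle (see \emph{e.g.} Garsia~\cite{GARSIA}) furnishes $h_0 \in L^p(\R^d)$ with $T^* h_0 = +\infty$ almost everywhere; choosing $\epsilon > 0$ so that $\|\epsilon h_0\|_p < \rho$, the perturbation $h := \epsilon h_0$ satisfies $T^* h = \epsilon T^* h_0 = +\infty$ a.e., contradicting the lower bound just obtained.

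The main obstacle in this plan is precisely the final step, where one must use the translation-invariance of the convolution process to upgrade the mere existence of an $h_0 \in L^p$ with $T^* h_0 = +\infty$ on a set of positive measure (which is immediate from the $L^p$-badness assumption and Sawyer's principle) to a function whose maximal operator is infinite \emph{everywhere}~--- the weaker positive-measure version alone does not supply the single universal perturbation one needs in the ball $\{h : \|h\|_p < \rho\}$. Once this is granted, the three observations above exhibit $\calG$ as a subset of a countable union of closed nowhere-dense subsets of $L^p(\R^d)$, so $\calG$ is meager, as claimed.
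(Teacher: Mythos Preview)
Your proof is correct and takes a genuinely different route from the paper's.

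The paper argues by constructing the \emph{complement}: for each tail maximal operator $T^*_nf:=\sup_{k\geq n}|T_kf|$ it checks that $T^*_n$ also fails the weak $(p,p)$ inequality (since the finite maximum $S_n=\max_{k<n}|T_k|$ is bounded), invokes Stein's principle to produce some $f_n$ with $T^*_nf_n=+\infty$ a.e., and then feeds this into the Del~Junco--Rosenblatt Baire-category theorem \cite{DJR} to obtain, for each unit cube $Q$ and each $n$, a dense $\calG_\delta$ set $\calX_{Q,n}\subseteq L^p$ on which $T^*_nf=+\infty$ a.e.\ on $Q$. Intersecting over all $Q$ and all $n$ yields a dense $\calG_\delta$ on which $\limsup_k|T_kf|=+\infty$ a.e., and the meagerness of $\calG$ follows.

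Your argument is more direct and more self-contained. The reduction from $\limsup_k|T_kf|<\infty$ to $T^*f<\infty$ (valid since each $T_kf$ is a.e.\ finite) lets you work with a single maximal operator rather than the whole family $(T^*_n)$. The explicit covering of $\calG$ by the closed sets $F^y_{r,M}$ replaces the appeal to Del~Junco--Rosenblatt, and the symmetrisation trick $f_0\pm h$ --- forcing $T^*h\leq M$ on a set of measure $\geq\tfrac13|B(0,r)|$ for \emph{every} small $h$ --- is a clean way to reach a contradiction with Stein's principle using only $T^*$ itself. The trade-off is that the paper's approach is modular (it imports a ready-made Baire-category lemma), whereas yours builds the nowhere-density by hand but needs no external Baire-type input beyond the same Stein/Nikishin step that both proofs ultimately rest on.
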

\begin{proof}
Define, for $n\in\N$, a maximal operator $T^*_n$ by letting, for a locally integrable $f$ and $x\in\R^d$:
$$
T^*_nf(x):=\sup_{k\geq n} |\varphi_k*f(x)|.
$$
Observe that our assumption can be reformulated by saying that $T^*:=T_0^*$ fails to satisfy a weak $(p,p)$-inequality.

For each $n\in\N$, define another operator $S_n$ by letting, for $f$ measurable and $x\in\R^d$:
$$
S_nf(x):=\max_{0\leq k< n} |T_k f(x)|.
$$
Since one has, for each $f\in L^p(\R^d)$:
$$
\|T_kf\|_p\leq \|\varphi_k\|_1\|f\|_p,
$$
it follows that any $T_k$ has weak type $(p,p)$ and hence that the same holds for the operator $S_n$ for any $n\in\N$.
Yet one has, for a measurable $f$ and $x\in\R^d$ and any $n\in\N$:
$$
T^*f(x)=\max\left\{S_nf(x),T_n^*f(x)\right\},
$$
and it hence follows that $T_n^*$ fails to satisfy a weak type $(p,p)$ inequality for any $n\in\N$.
It then follows from \cite[Proposition~1, p.~441]{STEINHA} that for each $n\in\N$ there exists $f_n\in L^p_+(\R^d)$ for which one has $T_n^*f_n=+\infty$ a.e. on $\R^d$. We can of course assume, without loss of generality, that one has $\|f_n\|_p=1$.

Fix now $n\in\N$ and a cube of side $1$, $Q\subseteq\R^d$, and denote by $\calL^d\hel Q$ the (outer) Lebesgue measure restricted to $Q$ (\emph{i.e.} defined for a subset $E\subseteq\R^d$ by $\calL^d\hel Q(E):=|Q\cap E|$). Consider for each integer $k\in\N$ the operator:
$$
T_k^Q:L^p(\R^d)\to L^0(\R^d,\calL^d\hel Q), f\mapsto T_kf.
$$
The operator $T_k^Q$ is continuous in measure since one has, for any $\epsilon$ and any sequence $(f_j)\subseteq L^p(\R^d)$ satisfying $\|f_j-f\|_p\to 0$:
\begin{multline*}
\calL^d\hel Q(\{ |T_kf_j-T_kf|>\epsilon\})\leq |\{ |T_kf_j-T_kf|>\epsilon\}|\\
\leq \frac{1}{\epsilon} \|\varphi*(f_j-f)\|_1\leq \frac{1}{\epsilon}\|\varphi_k\|_1\|f_j-f\|_p,
\end{multline*}
implying obviously that one has $\calL^d\hel Q(\{ |T_kf_j-T_kf|>\epsilon\})\to 0$, $j\to\infty$.

Applying Del Junco and Rosenblatt's \cite[Theorem~1.1]{DJR} to $\calB=L^p(\R^d)$, $\mu=\calL^d\hel Q$ and $(T_k)_{k\geq n}$, we find that there is a dense $\calG_\delta$ subset $\calX_{Q,n}\subseteq L^p(\R^d)$ such that one has $T_n^*f=+\infty$,  $(\calL^d\hel Q)$-a.e., that is $T_n^*f=+\infty$ a.e. on $Q$, for any $f\in \calX_{Q,n}$.

Now let:
$$
\calX_n:=\bigcap_{\nu\in\Z^n} \calX_{\nu+Q,n}.
$$
Obviously $\calX_n$ is a dense $\calG_\delta$ in $L^p(\R^d)$; for $f\in \calX_n$ we then have $T_n^*f=+\infty$ a.e. on $\nu+Q$ for any $\nu\in\Z^n$, and hence $T_n^*f=+\infty$ for a.e. $x\in\R^d$.

Finally let $\calX:=\bigcap_{n\in\N} \calX_n$, observe that $\calX$ is a dense $\calG_\delta$ subset of $L^p(\R^d)$ and that for $f\in \calX$ we have, for all $n\in\N$ and a.e. $x\in\R^d$:
$$
T_n^* f(x)=+\infty.
$$
Hence we have, for $f\in\calX$ and a.e. $x\in\R^d$:
$$
\limsupe_{k\to\infty} |T_kf(x)|=\inf_{n\in\N}\sup_{k\geq n}|T_kf(x)|=\inf_{n\in\N} T_n^*f(x)=+\infty.
$$
The result now immediately follows.
\end{proof}

\subsection*{Acknowledgements.} This research has been supported by the \emph{École Normale Supérieure}, Paris, by the \emph{``Gruppo Nazionale per l’Analisi Matematica, la Probabilità e le loro Applicazioni dell’Istituto Nazionale di Alta Matematica F. Severi''} and by the project \emph{Vain-Hopes} within the program \emph{Valere} of \emph{Università degli Studi della Campania ``Luigi Vanvitelli''}. It has also been partially accomplished within the \emph{UMI} Group \emph{TAA ``Approximation Theory and Applications''}.
E.~D’Aniello would like to thank the \emph{École Normale Supérieure}, Paris, for its warm hospitality in March-April, 2022; L.~Moonens would like to thank the \emph{Dipartimento di Matematica e Fisica} of the \emph{Università degli Studi della Campania ``Luigi Vanvitelli''} for its warm hospitality in April-May, 2022.

\bibliographystyle{plain}
\bibliography{DGMRbib}

\vspace{0.3cm}
\noindent
\small{\textsc{Emma D'Aniello}}\\[0.1cm]
\small{\textsc{Dipartimento di Matematica e Fisica},}
\small{\textsc{Universit\`a degli Studi della Campania ``Luigi Vanvitelli''},}
\small{\textsc{Viale Lincoln n. 5, 81100 Caserta,}}
\small{\textsc{Italia}}\\
\footnotesize{\texttt{emma.daniello@unicampania.it}.}

\vspace{0.3cm}
\noindent
\small{\textsc{Anthony Gauvan}}\\[0.1cm]
\small{\textsc{Laboratoire de Math\'ematiques d'Orsay, Universit\'e Paris-Sud, CNRS UMR8628, Universit\'e Paris-Saclay},}
\small{\textsc{B\^atiment 307},}
\small{\textsc{F-91405 Orsay Cedex},}
\small{\textsc{France}}\\
\footnotesize{\texttt{anthony.gauvan@universite-paris-saclay.fr}.}

\vspace{0.3cm}
\noindent
\small{\textsc{Laurent Moonens}}\\[0.1cm]
\small{\textsc{Laboratoire de Math\'ematiques d'Orsay, Universit\'e Paris-Saclay, CNRS UMR 8628},}
\small{\textsc{B\^atiment 307},}
\small{\textsc{F-91405 Orsay Cedex},}
\small{\textsc{France}}\\
\footnotesize{\texttt{laurent.moonens@universite-paris-saclay.fr}.}

\noindent\small{\textsc{\'Ecole Normale Supérieure-PSL University, CNRS UMR 8553},}
\small{\textsc{45, rue d'Ulm},}
\small{\textsc{F-75230 Parix Cedex 3},}
\small{\textsc{France}}\\
\footnotesize{\texttt{laurent.moonens@ens.fr}.}

\vspace{0.3cm}
\noindent
\small{\textsc{Joseph M. Rosenblatt}}\\[0.1cm]
\small{\textsc{Department of Mathematics, University of Illinois at Urbana-Champaign},}
\small{\textsc{Urbana, IL 61801},}
\small{\textsc{U.S.A.}}\\
\footnotesize{\texttt{rosnbltt@illinois.edu}.}

\end{document}